\def\Int{\operatorname{Int}}
\def\diam{\operatorname{diam}}
\def\wt{\operatorname{wt}}
\def\lg{\operatorname{lg}}
\def\bord{\partial}
\let\bydef\emph
\def\var{ma\-ni\-fold}
\def\svar{sub\-ma\-ni\-fold}
\def\irr{ir\-re\-du\-ci\-ble}
\def\comp{comp\-res\-si\-ble}
\def\incomp{in\-comp\-res\-si\-ble}
\def\triang{tri\-an\-gu\-la\-tion}
\newtheorem{theo}{Theorem}[section]
\newtheorem{prop}[theo]{Proposition}
\newtheorem{lem}[theo]{Lemma}
\newtheorem{slem}[theo]{Subemma}
\theoremstyle{definition}
\newtheorem*{defi}{Definition}
\newtheorem*{claim}{Claim}
\newenvironment{rem}{\bigskip\emph{Remark.}}{}
\newenvironment{rems}{\bigskip\emph{Remarks.}\begin{itemize}}{\end{itemize}}
\def\Rr{\mathbf{R}}
\def\Nn{\mathbf{N}}
\def\calT{\mathcal{T}}
\def\calC{\mathcal{C}}
\begin{document}
\title{A JSJ splitting for triangulated open 3-manifolds}
\author{Sylvain Maillot}

\maketitle

\begin{abstract}
We give a sufficient condition for an open $3$-manifold to
admit a decomposition along properly embedded open
annuli and tori, generalizing the toric splitting of
Jaco-Shalen and Johannson.
\end{abstract}

\section{Introduction}

This paper is a continuation of the program started in~\cite{maillot:spherical}, whose goal
is to find geometric conditions under which an open $3$-manifold admits a canonical
decomposition. In this paper we are concerned with Jaco-Shalen-Johannson (JSJ)
splittings.

Let us first recall some standard terminology from $3$-manifold theory. Throughout the
paper we work in the PL category. A $3$-manifold is \bydef{irreducible} if every embedded
$2$-sphere bounds a $3$-ball.  If $M$ is an orientable $3$-manifold and $F\subset M$ an embedded orientable surface not homeomorphic to $S^2$, then $F$ is \bydef{incompressible} if the homomorphism
$\pi_1F\to\pi_1M$ induced by inclusion is injective. A $3$-manifold $M$ is \bydef{atoroidal}
if all incompressible tori in $M$ are boundary-parallel. It is \bydef{Seifert fibered}, or
a \bydef{Seifert manifold}, if it fibers over a $2$-dimensional orbifold.

Let $M$ be an orientable, irreducible $3$-manifold without boundary. When $M$ is
compact, Jaco-Shalen~\cite{js:seifert} and Johannson~\cite{joh:hom} found a canonical family of pairwise disjoint, embedded, \incomp\ $2$-tori which split $M$ into submanifolds that are either Seifert
fibered or atoroidal. This family, called the \bydef{JSJ-splitting} of the $3$-manifold $M$, has several additional properties; for instance, any \incomp\ torus
embedded in $M$ can be homotoped into a Seifert piece.

A simple way to construct the JSJ-splitting of a closed manifold was given in~\cite{ns:canonical}: let
$F_1,F_2\subset M$ be two embedded surfaces in $M$. We say that
$F_1$ is \bydef{homotopically disjoint} from $F_2$ if there exists
a surface $F'_1$ which is homotopic to $F_1$ and disjoint from $F_2$.  An \incomp\ torus
is called \bydef{canonical} if it is homotopically disjoint from any other \incomp\ torus.
Then when $M$ is closed, its JSJ-splitting can be constructed by simply taking a disjoint
collection of representatives of each homotopy class of canonical tori.

We want to extend this theory to noncompact manifolds. There are of course obvious
sufficient conditions under which the theory goes through, for instance if $M$ is homeomorphic
to the interior of a compact \var. Thus we are typically interested in open $3$-manifolds
with infinite topological complexity. Consider the following class of examples: let $X$ be any
orientable, \irr, atoroidal $3$-\var\ whose boundary is an incompressible open annulus,
different from $S^1\times\Rr\times [0,\infty)$. Let $Y$ be the product of $S^1$ with an
orientable surface of infinite genus and boundary a line. Then by gluing $X$ and $Y$
together along their boundary annuli, one obtains a $3$-\var\ $M$. Such a \var\ certainly has
infinite topological complexity in any reasonable sense of the word (e.g.~its fundamental
group is infinitely generated). Yet it has an obvious splitting along a properly
embedded \incomp\ annulus into an atoroidal piece $X$ and a Seifert piece $Y$.

If in addition, $X$ contains no properly embedded compact annuli other than those
parallel to the boundary, then the Seifert submanifold $Y$ will be `maximal' in the sense that
every incompressible torus can be homotoped into $Y$. Thus it seems to the author
that this splitting qualifies as a JSJ-splitting. Note that such a manifold $M$ contains no
canonical tori; by contrast, it contains plenty of noncanonical tori, which in a sense `fill up' the
Seifert piece of the JSJ-splitting. This motivates the following definition:

\begin{defi}
A \bydef{JSJ-splitting} of an open, orientable, \irr\ $3$-manifold $M$ is a locally finite collection $\calC$ of disjoint properly embedded \incomp\ tori and open annuli satisfying the following conditions:
\begin{enumerate}
\item Each component of $M$ split along $\calC$ is Seifert fibered or atoroidal.
\item Each torus of $\calC$ is canonical.
\item Every canonical torus is homotopic to a torus of $\calC$.
\item Every incompressible torus is homotopic into a Seifert piece of $M$ split along $\calC$.
\end{enumerate}
\end{defi}

Although it may seem plausible at first sight that every open $3$-manifold has a JSJ-splitting,
this is not true. Indeed, in~\cite{maillot:ex} there is a construction of a $3$-manifold $M_3$
containing an infinite collection of pairwise non-homotopic canonical tori that intersect
some fixed compact set essentially; therefore, any collection of tori containing a representative
of each homotopy class of canonical tori fails to be locally finite, however the representatives
are chosen.

Our main result is a sufficient condition for an open $3$-\var\ $M$ to admit a
JSJ-splitting. Before stating it, we give a few definitions and conventions.
We shall assume all $3$-\var s in this paper to be orientable.

Let $M$ be a $3$-\var.
If $F_1$ and $F_2$ are not homotopically disjoint, we will say that they \bydef{intersect essentially}.
We say that $F_1,F_2$ \bydef{intersect minimally} if they are in general position, and the number of components of $F_1\cap F_2$ cannot be reduced by homotoping one of the surfaces.

Note that by a theorem of Waldhausen, homotopy implies isotopy for compact surfaces in a Haken
\var\ $M$. This result is still true if $M$ is open and \irr\ because any homotopy takes place
in a compact subset of $M$, which can be embedded into a Haken \svar\ of $M$. Hence we
shall sometimes use the words ``homotopic'' and ``isotopic'' interchangeably.

Let $T$ be an \incomp\ torus in $M$. Assume that $T$ is not canonical, and $M$ is open.
We shall see (Proposition~\ref{special} below) that
there is a unique free isotopy class $\xi(T)$ of simple closed curves  in $T$ such that
if $T'$ is an \incomp\ torus which is not homotopically disjoint from $T$ and intersects $T'$
minimally, then $T\cap T'$ consists of curves belonging to $\xi(T)$. We say that a simple
closed curve in $T$ is \bydef{special} if it belongs to $\xi(T)$. The motivating example
is the following: let $\Sigma$ be an open Seifert fibered manifold. Then every incompressible
torus is vertical (i.e., isotopic to a union of fibers); when two such tori are homotoped so as
to intersect minimally, the intersection curves are isotopic to regular fibers of the Seifert
fibration, which is unique except in a few special cases.
Thus, in general, special curves should correspond to fibers of Seifert components of the JSJ-splitting
we are looking for.

Let $\calT$ be a \triang\ of $M$. We denote the $k$-skeleton of $\calT$ by
$\calT^{(k)}$. We say that $\calT$  has \bydef{bounded geometry} if there is a uniform upper bound
on the number of simplices containing a given vertex. The \bydef{size} of a subset $A\subset M$ is the minimal number of
$3$-simplices of $\calT$ needed to cover $A$. This can be used to give a rough notion
of distance (more precisely a \bydef{quasimetric} in the sense of~\cite{sm:seifert}) $d_\calT$ on $M$ as follows: given two points $x,y\in M$, we let $d_\calT(x,y)$ be the minimal size
of a path connecting $x$ to $y$, minus one. (Quasi)metric balls,
neighborhoods, diameter etc.~can be defined in the usual way.

If $\gamma\subset M$ is a curve which is in general position with respect to $\mathcal T$,
then we define the \bydef{length} of $\gamma$ as the cardinal of $\gamma\cap \calT^{(2)}$.
 If $F$ is a general position compact surface in $M$, then the \bydef{weight} of $F$ is the cardinal of
 $F\cap \calT^{(1)}$.  We say that $F$ is \bydef{normal} if it misses $\calT^{(0)}$ and meets
transversely each 3-simplex $\sigma$ of $\calT$ in a finite collection of
disks that intersect each edge of $\sigma$ in at most one point. We say that $F$ is
a \bydef{least weight} surface if it has minimal weight among all surfaces isotopic to $F$.

\begin{theo}~\label{th:main}
Let $M$ be an open, orientable, \irr\ $3$-\var. Let $\calT$ be a \triang\ of bounded geometry
on $M$. Assume that the following hypotheses are fulfilled:
\begin{enumerate}
\item[\rm A)] There is a constant $C_1$ such that each isotopy class of canonical tori has a representative
of weight at most $C_1$.
\item[\rm B)] There is a constant $C_2$ such that for each least weight normal noncanonical \incomp\ torus $T$ and each point $x\in T$, there is a special curve $\gamma$ passing through $x$ and of length at
most $C_2$.
\item[\rm C)] There are constants $C_3,C_4$ such that if $T$ is a least weight normal noncanonical \incomp\ torus and $\sigma$ a $3$-simplex of $\calT$ such that $T\cap\sigma$ is disconnected, then
there exists a normal torus $T'$ of weight at most $C_3$, which is not homotopically disjoint
from $T$, and such that $T\cap T'$ contains a point whose distance from $\sigma$ is at most $C_4$.
\item[\rm D)] If $T$ and $T'$ are two disjoint least weight normal noncanonical tori, $\sigma$ is
a $3$-simplex such that $T\cap\sigma$ (resp.~$T'\cap\sigma$) consists of a single
disk $D$ (resp.~$D'$), then one can find an annulus $A$ connecting a special curve on $T$ to
a special curve on $T'$, and such that $\Int A$ does not intersect $T,T'$, and that there
is an essential arc $\alpha\subset A$ such that $\alpha\subset\sigma$ and $\alpha$
connects $D$ to $D'$.
\end{enumerate}
Then $M$ admits a JSJ-splitting.
\end{theo}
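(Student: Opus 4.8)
The plan is to build $\calC$ by a two-stage procedure: first deal with the canonical tori, then organize the noncanonical ones into Seifert pieces. For the first stage, hypothesis~A) bounds the weight of some representative of each homotopy class of canonical torus; since $\calT$ has bounded geometry, there are only finitely many normal surfaces of weight at most $C_1$ up to normal isotopy, hence only finitely many homotopy classes of canonical tori altogether. Pick least-weight representatives; because canonical tori are homotopically disjoint from all incompressible tori, in particular from each other, Waldhausen's theorem lets us isotope them to be pairwise disjoint, giving a finite (hence locally finite) subcollection $\calC_0$ satisfying conditions (ii) and (iii) of the definition.

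The heart of the argument is the second stage: producing, from the noncanonical tori, a locally finite family of properly embedded open annuli (and possibly some extra tori) that cut out the Seifert pieces. Here the idea is to use the special-curve structure of Proposition~\ref{special}. Hypothesis~B) says that through every point of a least-weight noncanonical torus there is a short special curve; the intent is that these short special curves should be the fibers of a Seifert fibration on the region ``filled up'' by noncanonical tori. I would consider the union $W$ of all least-weight normal noncanonical incompressible tori (or rather a suitable saturated neighborhood of it), and try to show that the short special curves assemble into a foliation by circles, i.e.\ a Seifert fibration, of each component of $W$. Hypotheses~C) and~D) are exactly the local gluing data needed to run this assembly across the $3$-simplices of $\calT$: C) guarantees that wherever a noncanonical torus meets a simplex in a disconnected piece there is another nearby noncanonical torus hitting it essentially, which prevents the fibration from degenerating and keeps the relevant combinatorial complexity bounded; D) provides, for two disjoint noncanonical tori meeting a common simplex in single disks, an annulus realizing a homotopy of a special curve on one to a special curve on the other through that simplex --- this is what lets one verify that the local fiber structures on overlapping simplices agree, and that the boundary of $W$ is a union of tori and annuli that are vertical in the fibration. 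The complement of $W$ in $M$ is then atoroidal (any incompressible torus in it, being noncanonical if essential, would have a least-weight representative and hence lie in $W$), so the frontier tori and annuli of $W$, together with $\calC_0$, form the desired collection $\calC$; local finiteness follows from the bounded-geometry hypothesis and the uniform bounds $C_1,\dots,C_4$, which confine all the relevant surfaces and annuli to bounded-size neighborhoods of the simplices they meet.

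I expect the main obstacle to be precisely the coherence of the local Seifert structures: turning the pointwise existence of short special curves (hypothesis~B) into an honest, globally defined Seifert fibration on each component of $W$, with the frontier annuli and tori vertical. This requires showing that the ``short special curve through $x$'' is essentially unique (up to isotopy within a bounded region), that it varies continuously, and that it closes up correctly when one passes from one simplex to an adjacent one --- which is where D) must be invoked carefully, and where one must rule out the special curves spiraling or the putative fibration having unexpected exceptional behavior. A secondary technical difficulty is establishing local finiteness of $\calC$ rigorously: one must argue that only finitely many of the frontier annuli and tori can meet any given $3$-simplex, which should follow from C) and the bounded-geometry assumption via a counting argument on normal surfaces of bounded weight, but making the bookkeeping precise --- especially for the non-compact annuli, whose ends escape to infinity --- will take some care.
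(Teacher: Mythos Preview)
Your first-stage claim that there are only finitely many homotopy classes of canonical tori is incorrect: bounded geometry gives only finitely many normal surfaces of weight $\le C_1$ \emph{in any fixed finite subcomplex}, not in all of $M$. An open $3$-manifold can contain infinitely many pairwise non-homotopic canonical tori of bounded weight; what hypothesis~A actually buys, via a Haken-type finiteness argument, is that the collection of least-weight canonical representatives is \emph{locally finite}. That suffices for the construction, so the slip is recoverable, but it is a slip.

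The more serious gap is in your second stage. Special curves live on noncanonical tori, not in the ambient $3$-manifold: hypothesis~B produces a short special curve through each point \emph{of a torus}, not through each point of your region $W$, so there is no direct mechanism for ``assembling the special curves into a foliation by circles of $W$''. The paper does not attempt this. The key idea you are missing is to build the Seifert region as an \emph{increasing union of compact Seifert submanifolds}: a regular neighborhood of two essentially intersecting incompressible tori is already Seifert fibered (this is classical; after capping off compressible boundary tori one gets a large Seifert submanifold with incompressible boundary). Iterating, and enlarging so as to absorb all least-PL-area tori homotopic into it (``tautening''), yields a sequence $\Sigma_0\subset\Sigma_1\subset\cdots$ of taut graph submanifolds engulfing every noncanonical torus. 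The genuine work, and the genuine role of hypotheses~B, C, D, is not in constructing a fibration---each $\Sigma_n$ is already fibered---but in controlling the \emph{limit}: one must show that the closure $\Sigma$ of $\bigcup_n\Sigma_n$ is a submanifold and that each component of $\partial\Sigma$ is an incompressible torus or open annulus. Hypothesis~C is used to show that only finitely many boundary tori of the $\Sigma_n$ meet a fixed simplex in a disconnected set; hypothesis~D bounds the number of equivalence classes of normal disks of $\bigcup_n\partial\Sigma_n$ in a fixed simplex, which is what makes $\Sigma$ a submanifold; hypothesis~B is used to show that noncompact components of $\partial\Sigma$ are planar with two ends, hence annuli. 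Your outline lacks this exhaustion-and-limit structure, and without it I do not see how the coherence argument you anticipate can be carried out.
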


\begin{rems}

\item Condition A is an obvious way to rule out the phenomenon of Example $M_3$ in~\cite{maillot:ex}
where the pathology comes from canonical tori.
One might wonder whether it is sufficient. The answer is probably no. Example $\mathcal{O}_5$
of the same paper is an \emph{orbifold} with no canonical toric $2$-suborbifolds and yet no JSJ-splitting
(the definitions being extended to orbifolds in the obvious way.)
It seems highly likely that manifolds with the same property exist.

\item It is tempting to replace conditions B--D with simpler conditions, e.g.~requiring that
A be true even for noncanonical tori. However, such a hypothesis would be unreasonably
strong in the sense that it would rule out examples as well as counterexamples.
This is best explained by analogy with surfaces. Imagine that $M$ is a $3$-manifold
with a JSJ-splitting, $\Sigma$ a Seifert piece, and $g$ is a complete Riemannian metric whose restriction to $\Sigma$ is obtained by lifting a metric on the base orbifold $F$. Then
incompressible tori in $\Sigma$ correspond to curves in $F$, with area corresponding
to length. If one thinks that $M$ is triangulated in such a way that the simplices are
very small and of roughly uniform size, then the weight of the tori in $\Sigma$ corresponds to the
length of the curves in $F$, and  two tori that intersect a common $3$-simplex
correspond to two curves which are very close.

Now to assume that every simple closed curve in $F$ is homotopic
to a curve of uniformly bounded length would be too restrictive: it would force $F$
to be topologically finite. By contrast, it is reasonable to assume that for each long
geodesic $\gamma$ that comes very close to itself, there is a short geodesic intersecting
essentially $\gamma$ near the region where this happens. Condition C is analogous
to this, and essentially says that there are `enough small tori' to generate the Seifert
pieces of the JSJ-splitting we are looking for. Likewise, conditions B and D respectively
mean that the fibers of the Seifert pieces can be represented uniformly by small
curves, and that different Seifert pieces are sufficiently far apart from one another.
\end{rems}

We now give an informal outline of the proof of Theorem~\ref{th:main}. It is somewhat oversimplified
since we ignore such technical complications as Klein bottles and nonseparating tori,
but should help the reader understand the main ideas. We use the Jaco-Rubinstein theory
of PL minimal surfaces, which is reviewed in section~\ref{sec:pl}.

If all \incomp\ tori in $M$ are canonical, one takes a least PL area representative of
each homotopy class. This gives a possibly infinite collection $\mathcal C$ of canonical
tori. Since members of $\mathcal C$ are canonical, they are homotopically disjoint.
Since they have least PL area in their respective homotopy classes, they are in fact
disjoint. By hypothesis~A, they have uniformly bounded weight. From this, it is not
difficult to show that they have uniformly bounded diameter. Moreover, the collection
$\mathcal C$ is locally finite by Haken's finiteness theorem.
Hence the difficult case is when there are infinitely many noncanonical tori. In this case, we
certainly do not expect PL least area tori to have uniformly bounded diameter. Hence they
can accumulate, and the point is to understand how they accumulate.

More precisely, we need to use
those tori to build Seifert \svar s. For this, there is a well-known
construction~(cf., e.g., \cite{scott:fake}): letting $T,T'$
be two noncanonical tori intersecting minimally, one takes a regular neighborhood  $\Sigma$ of
$T\cup T'$. This neighborhood has a fibration by circles such that $T\cap T'$ is a union
of fibers. Its boundary consists of tori, which may or may not be \incomp. The compressible
ones bound solid tori, to which the fibration on $\Sigma$ can be extended, possibly with
exceptional fibers.

By iterating this construction, one builds a increasing sequence of Seifert submanifolds of $M$
engulfing more and more noncanonical tori. The main difficulty is that while the union
of those submanifolds is still Seifert fibered, the closure of this union need not be. In fact,
it need even not be a manifold. This is where we use hypotheses B, C, and D.

The paper is organized as follows. In Section~\ref{sec:pl}, we review normal surfaces and PL minimal surfaces.
In Section~\ref{sec:seifert} we review Seifert manifolds and prove Proposition~\ref{special},
which justifies the definition of special curves. In Section~\ref{sec:graph} we deal with graph submanifolds and state a technical result, Theorem~\ref{th:graph}, from which Theorem~\ref{th:main}
is easily deduced. Section~\ref{sec:proof} contains the proof of Theorem~\ref{th:graph}.

\section{Normal surfaces and PL area}\label{sec:pl}

Recall from~\cite{sm:seifert} the definition of a \bydef{regular
Jaco-Rubinstein metric} on $(M,\calT)$: it is a Riemannian metric on
$\calT^{(2)}-\calT^{(0)}$ such that each $2$-simplex is sent
isometrically by barycentric coordinates to a fixed ideal triangle in
the hy\-per\-bo\-lic plane. The crucial property for
applications to noncompact manifolds is that for every number $n$,
there are finitely many subcomplexes of size $n$ up to isometry. 
Let $F\subset M$ be a compact, orientable, embedded surface in general position with
respect to $\calT$. Recall from the introduction that the \bydef{weight} $\wt(f)$ of $f$ is
the cardinal of $F\cap \calT^{(1)}$.
Its \bydef{length} $\lg(F)$ is the total length of all the arcs in the
boundaries of the disks in which $F$ intersects the 3-simplices of
$\mathcal T$. The \bydef{PL area} of $F$ is the pair 
$|F|=(\wt(F),\lg(F))\in \Nn\times \Rr_+$. We are interested in surfaces
having least PL area among surfaces in a particular class with 
respect to the lexicographic order.

We collect in the next proposition some existence results and properties of PL least area surfaces. We are mostly interested in the case of tori and Klein bottles. 
\begin{prop}\label{pl stuff}
\begin{enumerate}
\item Let $F\subset M$ be an \incomp\ normal surface that is not homotopic to a double cover of
an embedded nonorientable surface. Then there is a unique normal surface $F_0$
which is normally homotopic to $F$ and has least PL area among such surfaces.
\item Let $F\subset M$ be an \incomp\ surface. Then there is a normal surface $F_0$ which has
least weight among all surfaces homotopic to $F$. Furthermore, if $F$ is not homotopic to
a double cover of an embedded nonorientable surface, then there is a normal
surface $F_0$ which has least PL area among all surfaces homotopic to $F$.
\item Let $F,F'\subset M$ be \incomp\ surfaces that have minimal PL area in their respective
homotopy classes. If $F$ and $F'$ are homotopically disjoint, then they are disjoint or equal.
Otherwise, after a small perturbation they intersect minimally.
\end{enumerate}
\end{prop}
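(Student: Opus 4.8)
The three statements are the open--manifold analogues of results of Jaco and Rubinstein in the compact case (see the discussion of the regular Jaco--Rubinstein metric in~\cite{sm:seifert}), and the plan is to reduce each of them to the compact case by confining the surfaces involved to subcomplexes of bounded \size, where the finiteness of subcomplexes of given size up to isometry plays the role of compactness. The basic estimate is that a normal surface $G$ meets at most $\wt(G)$ edges of $\calT$; since $\calT$ has bounded geometry, each such edge lies in a bounded number of $3$-simplices, so $G$ is contained in a subcomplex $K$ of $\calT$ whose \size\ is bounded by a function of $\wt(G)$ alone. For part~(1) one checks (this is essentially built into the framework of~\cite{sm:seifert}) that any normal surface normally homotopic to $F$ has weight $\wt(F)$ and that a normal homotopy joining it to $F$ can be taken inside a subcomplex $N$ of \size\ bounded in terms of $\wt(F)$. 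Such an $N$ embeds in a compact triangulated $3$-\var, so the Jaco--Rubinstein existence and uniqueness theorem for a least PL area surface in a prescribed normal homotopy class applies inside $N$; since any competitor is likewise confined to $N$, the surface so obtained is a global minimum.

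Uniqueness, both here and the intersection properties of part~(3), rest on the Jaco--Rubinstein \emph{regular exchange} operation (double-curve sum) along the intersection of two general-position normal surfaces, which replaces $F_0,F_1$ by normal surfaces $\hat F_0,\hat F_1$, respectively homotopic to $F_0$ and $F_1$, with $|\hat F_0|+|\hat F_1|\le|F_0|+|F_1|$. When $F_0$ and $F_1$ are normally homotopic and both of least PL area, equality is forced, so the intersection can be removed by an isotopy, after which the two surfaces cobound a product region and least--length considerations force them to coincide, exactly as in the compact case. The hypothesis that $F$ is not homotopic to a double cover of a one-sided surface is precisely what guarantees that the exchanged surfaces are two-sided and lie in the correct homotopy classes; this is where Klein bottles have to be excluded.

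For part~(2), take surfaces $F_i$ homotopic to $F$ with $|F_i|$ decreasing to the infimum. After discarding finitely many terms, $\wt(F_i)$ equals the least weight $w$; after normalizing each $F_i$ --- which does not increase the weight and, as $F$ is \incomp\ and not a sphere, keeps it in the homotopy class --- we obtain normal surfaces of weight $w$ homotopic to $F$, each confined to one of finitely many isometry types of subcomplex. In each such subcomplex there are only finitely many normal surfaces of weight at most $w$ up to normal isotopy, and each of their normal isotopy classes carries a least--length representative by the straightening argument of Jaco--Rubinstein; hence the infimum of $\lg$ over the homotopy class of $F$ is the minimum of finitely many real numbers and is attained. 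The attaining surface is the desired least PL area (in particular least weight) normal surface. No uniqueness is asserted, consistently with the fact that parallel copies of a surface inside a product region furnish distinct least--weight normal representatives; and when $F$ \emph{is} homotopic to a double cover of a one-sided surface the weight--only version of the argument still produces a least--weight normal representative.

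Finally, for the intersection statement of part~(3): put $F$ and $F'$ in general position and perform the regular exchange along $F\cap F'$, obtaining $\hat F\simeq F$ and $\hat F'\simeq F'$ with $|\hat F|+|\hat F'|\le|F|+|F'|$. Since $F$ and $F'$ are area--minimizing in their homotopy classes, equality holds and no component of $F\cap F'$ can be removed by an area--nonincreasing exchange. If $F$ and $F'$ are homotopically disjoint, no intersection curve is essential on both surfaces, and an innermost--disk isotopy, performed within the minimizing classes, removes $F\cap F'$ entirely, so $F$ and $F'$ are disjoint or equal. If instead they are not homotopically disjoint, the same analysis shows that no intersection curve can be eliminated by an isotopy, which is exactly the assertion that, after a small perturbation to general position, $F$ and $F'$ intersect minimally. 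The one real obstacle throughout is the existence part of~(1) and~(2): a minimizing sequence in an open manifold has no reason to converge, and it is only the confinement estimate together with the finiteness--up--to--isometry property of the regular metric that lets one fall back on the compact Jaco--Rubinstein theory; once that reduction is made, the exchange arguments are local and raise no new difficulty beyond the bookkeeping for one-sided surfaces.
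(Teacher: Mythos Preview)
Your approach is essentially the same as the paper's: reduce to the compact Jaco--Rubinstein theory via the regularity of the Jaco--Rubinstein metric. The paper simply cites \cite{jr:min} for the compact statements and \cite[Appendix~A]{sm:seifert} for the extension to open manifolds, whereas you spell out the reduction through the confinement-plus-finiteness argument; these amount to the same thing.

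One imprecision worth noting: your description of the ``regular exchange (double-curve sum)'' as replacing $F_0,F_1$ by surfaces $\hat F_0,\hat F_1$ respectively homotopic to $F_0$ and $F_1$ conflates two different operations. The Haken double-curve sum produces a \emph{single} normal surface $F_0+F_1$, and a cut-and-paste along all of $F_0\cap F_1$ does not in general return surfaces in the original homotopy classes. The Freedman--Hass--Scott style argument that the paper invokes (via \cite{fhs}) for the second half of (iii), and that underlies the ``disjoint or equal'' statement, proceeds instead by local swaps along a single intersection curve (innermost disks and annuli), together with passage to suitable covers. This does not undermine your overall plan, but the mechanics differ from what you wrote, and the hypothesis excluding double covers of one-sided surfaces enters precisely in that covering step rather than in a global exchange.
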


\begin{proof}
When $M$ is compact, (i) follows from~\cite[Theorem 2]{jr:min}, (ii) from~\cite[Theorems 3, 4, 6]{jr:min}; the first part of (iii) follows from~\cite[Theorem 7]{jr:min}, and the second part,
though not explicitly recorded in~\cite{jr:min}, can be
proved by copying the proof of the corresponding result of~\cite{fhs} for Riemannian
least area surfaces.

The proofs are easily extended to the noncompact case since the Jaco-Rubinstein metric
is regular, as noted in~\cite[Appendix A]{sm:seifert}.
\end{proof}

In the sequel, we shall call ``least PL area surface'' an embedded normal surface that has least
PL area in its homotopy class.

\begin{rem}
If an \incomp\ torus $T$ is homotopic to a double cover of a Klein bottle embedded in $M$, then
it bounds a \svar\ of $M$ homeomorphic to $K^2\tilde\times I$. This implies that $T$ is
canonical. Hence the second sentence of Proposition~\ref{pl stuff}(ii) always applies to
noncanonical tori. If $T$ is a canonical torus, however, there may be a least PL area
torus homotopic to $T$, or a least PL area Klein bottle $K$ such that $T$ is homotopic
to a double cover of $K$, or both.
\end{rem}

In Section~\ref{sec:proof} we shall need a notion of normality for
surfaces that are not properly embedded in $M$ (arising as compact
subsurfaces of noncompact normal surfaces.) We adopt the following definitions:
let $\gamma$ be a $1$-\svar\ of $M$. We say that it is \bydef{normal} if it is in general
position with respect to $\calT$, and for every $3$-simplex $\sigma$ of $\calT$, each
component of $\gamma\cap\sigma$ is an arc connecting two distinct faces of $\sigma$.
(This notion is akin to M.~Dunwoody's \emph{tracks}.) If $F$ is a normal surface in $M$,
and $F'$ is a subsurface (with boundary) of $F$, then $F'$ is \bydef{normal} if its boundary
is normal. The notion of \bydef{normal homotopy} extends to normal curves in the
obvious way.

The following lemma from~\cite{sm:seifert} provides a useful
inequality between the weight of a normal surface and the diameter of
its image with respect to the quasimetric $d_\calT$. The proof can be extracted from
Lemmas~2.2 and~A.1 of~\cite{sm:seifert}. (There the surface $F$ is supposed to
be properly embedded, but the extension is immediate.)

\begin{lem}\label{myineq}
Let $F$ be a compact, not necessarily properly embedded, normal surface.Then
$\diam(F)\le \wt(F)^2$.
\end{lem}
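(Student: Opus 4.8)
The plan is to reduce the statement to a purely combinatorial fact about the way the normal disks of $F$ fit together, and then to produce, for any two points of $F$, an explicit path inside $F$ of controlled size. Throughout I take $F$ connected (in the applications it is a normal torus or Klein bottle), and I set $w=\wt(F)$. First I would record the standard structure: $F$ is the union of the closures $\overline D_1,\dots,\overline D_n$ of the components of the $F\cap\sigma$, as $\sigma$ ranges over the $3$-simplices of $\calT$, and — $F$ being incompressible — each $D_i$ is a normal triangle or square, so $\partial D_i$ meets $\calT^{(1)}$ in exactly three or four of the $w$ weight points of $F$. The key elementary observation to extract here is that two of these disks can meet only along a common face or edge of $\calT$, and that whenever $\overline D_i\cap\overline D_j\neq\emptyset$ with $i\neq j$ the intersection is either a single point of $\calT^{(1)}$ or a normal arc whose two endpoints lie on the edges it crosses; in either case $D_i$ and $D_j$ have a weight point in common.

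The combinatorial heart is an auxiliary graph $G$ whose vertices are the $w$ weight points of $F$ and in which, for each $D_i$, the weight points on $\partial D_i$ are joined pairwise. I would then prove that $G$ is connected. If not, write $G=G_1\sqcup G_2$; since the weight points of each $D_i$ form a clique, every disk has all its weight points in $G_1$ or all in $G_2$, which partitions the disks as $\calD_1\sqcup\calD_2$; by the observation above no disk of $\calD_1$ meets a disk of $\calD_2$, so $\bigcup_{D\in\calD_1}\overline D$ and $\bigcup_{D\in\calD_2}\overline D$ are disjoint closed subsets covering $F$, contradicting connectedness. Hence $G$ is connected with at most $w$ vertices, so any two of its vertices are joined by a path in $G$ of length $\le w-1$.

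Finally I would convert a $G$-path into a path in $M$ and bound its size. Given $x,y\in F$, pick normal disks $\overline D_x\ni x$, $\overline D_y\ni y$, weight points $p\in\partial D_x$, $q\in\partial D_y$ (each disk has at least three), and a $G$-path $p=q_0,q_1,\dots,q_\ell=q$ with $\ell\le w-1$ in which each edge $q_jq_{j+1}$ lies on a common normal disk $E_j$. The concatenation of an arc from $x$ to $p$ in $\overline D_x$, arcs from $q_j$ to $q_{j+1}$ in $\overline E_j$ ($0\le j<\ell$), and an arc from $q$ to $y$ in $\overline D_y$ is a path from $x$ to $y$ each of whose $\ell+2$ legs lies in a single closed $3$-simplex; hence it is covered by at most $\ell+2\le w+1$ three-simplices, so $d_\calT(x,y)\le\ell+1\le w$. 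This already yields $\diam(F)\le\wt(F)\le\wt(F)^2$ (the case of empty $F$ being trivial). For the version where $F$ is only a compact normal subsurface of a (possibly noncompact) normal surface $\widehat F$, the same scheme applies to the decomposition of $F$ into pieces of normal disks of $\widehat F$ glued along normal arcs of $\widehat F$ or of $\partial F$, whose endpoints again lie in $\calT^{(1)}$. The step I expect to require the most care — the main obstacle — is precisely this last extension: the pieces of $F$ inside a $3$-simplex need not be full normal disks and may carry no weight point of their own, so one must verify that such pieces can still be crossed cheaply (using the normal arcs of $\partial F$) and that the relevant graph remains connected, which is what makes the passage from the properly embedded case "immediate" in the sense of Lemmas~2.2 and~A.1 of \cite{sm:seifert}.
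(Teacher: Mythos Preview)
The paper does not give a proof here; it only refers to Lemmas~2.2 and~A.1 of~\cite{sm:seifert} and asserts that the extension from the properly embedded case is ``immediate''. So there is no in-paper argument to compare against, and I assess your proposal on its own.

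For closed (properly embedded) $F$ your graph argument is correct and in fact yields the sharper inequality $\diam(F)\le\wt(F)$. One small correction: that each $D_i$ is a normal triangle or quadrilateral follows from the \emph{definition} of a normal surface, not from incompressibility, which the lemma does not assume.

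For the case the paper actually needs --- a compact normal subsurface $F$ of an ambient normal surface $\widehat F$ --- there is a real gap, and one of your assertions is false. The arcs of $\partial F$ inside a $3$-simplex $\sigma$ connect two distinct \emph{faces} of $\sigma$ (that is exactly the definition of a normal curve in this paper); their endpoints therefore lie in $\calT^{(2)}$, not in $\calT^{(1)}$. Consequently a piece of $F\cap\sigma$ bounded by two such arcs together with subarcs of the sides of an ambient normal disk --- for instance the middle band cut out of a normal quadrilateral of $\widehat F$ by two parallel arcs of $\partial F$ running from one side to the opposite side --- can carry no weight point at all, and your auxiliary graph $G$ acquires no vertex from it. You flag this possibility in your last sentence, but nothing in the outline bridges it: as written, the scheme does not exclude $\wt(F)=0$ while $F$ passes through many simplices, so the inequality is not established in the non-closed case. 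Presumably this is exactly what Lemma~A.1 of~\cite{sm:seifert} supplies, and it is plausibly here that the second factor of $\wt(F)$ in the stated bound is spent.
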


It has the following important consequence:
\begin{prop}\label{loc fin}
Let $\mathcal C$ be a collection of pairwise nonisotopic closed normal
surfaces. If members of $\mathcal C$ have uniformly bounded weight, then $\mathcal C$ is
locally finite.
\end{prop}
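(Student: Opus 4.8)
The plan is to argue by contradiction using the weight bound together with Lemma~\ref{myineq} and the bounded-geometry hypothesis on $\calT$. Suppose $\mathcal{C}$ is not locally finite. Then there is a compact set $K \subset M$ meeting infinitely many members of $\mathcal{C}$, say $F_1, F_2, \dots$, pairwise nonisotopic. Enlarging $K$, I may assume $K$ is a union of $3$-simplices of $\calT$; since $K$ is compact it has finite size, say $\size(K) = N$. Pick a point $x_i \in F_i \cap K$ for each $i$. Each $F_i$ has weight at most some uniform bound $W$, so by Lemma~\ref{myineq} we have $\diam(F_i) \le W^2$; hence every $F_i$ is contained in the $d_\calT$-ball of radius $W^2$ about $x_i$, and since all the $x_i$ lie in $K$, all the $F_i$ are contained in a common bounded neighborhood $K'$ of $K$.

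Now the key point is finiteness up to the combinatorial structure. The set $K'$ has finite size, say $\size(K') = N'$, and it carries a subtriangulation which is a subcomplex of $\calT$ of size at most $N'$. Since $\calT$ has bounded geometry, there are only finitely many isomorphism types of subcomplexes of $\calT$ of size $\le N'$ (this is exactly the crucial property of regular Jaco-Rubinstein metrics / bounded geometry recalled before Proposition~\ref{pl stuff}; alternatively it is elementary from a uniform bound on simplices per vertex). A normal surface of weight $\le W$ inside a fixed finite subcomplex $L$ is determined by finitely much combinatorial data: the number of normal disks of each of the finitely many normal disk types in each $3$-simplex of $L$ is bounded in terms of $W$ (each normal disk has weight at least $1$), so there are only finitely many normal isotopy classes of weight-$\le W$ normal surfaces in $L$. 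Combining these two finiteness statements, the collection of all weight-$\le W$ normal surfaces contained in subcomplexes of size $\le N'$ falls into finitely many normal isotopy classes, hence into finitely many isotopy classes in $M$. But $F_1, F_2, \dots$ are infinitely many pairwise nonisotopic such surfaces — a contradiction.

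The main obstacle, and the step requiring the most care, is the second paragraph: translating ``uniformly bounded weight and bounded geometry'' into ``finitely many normal isotopy types.'' One must be careful that the $F_i$ are genuinely normal (given) and that the ambient region $K'$ really does have size bounded purely in terms of $N$ and $W$ — this is where Lemma~\ref{myineq} is used, to bound $\diam(F_i)$ and hence the size of a region swallowing all of them. After that, one invokes the finiteness of subcomplexes of bounded size up to isometry, which is available from the bounded-geometry assumption, and the trivial observation that a normal surface of bounded weight in a fixed finite complex has boundedly many normal disks. I would keep the writeup short, since each sub-claim is routine once the contradiction setup is in place.
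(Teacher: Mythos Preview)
Your argument is correct and follows essentially the same route as the paper: use Lemma~\ref{myineq} to trap all relevant surfaces in a single finite subcomplex $K'$, then invoke the fact that a finite complex contains only finitely many normal surfaces of bounded weight up to isotopy. The paper phrases this directly rather than by contradiction, and is considerably terser (it simply asserts the last finiteness fact), but the substance is identical; note also that your detour through ``finitely many isomorphism types of subcomplexes of size $\le N'$'' is unnecessary, since all your $F_i$ already lie in the \emph{same} finite complex $K'$.
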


\begin{proof}
Let $K$ be a compact subset of $M$. Let $\mathcal C_K$ be the collection of members of
$\mathcal C$ that meet $K$. By Lemma~\ref{myineq}, there is a uniform bound on the
diameter of members of $\mathcal C_K$, which implies that they are all contained in some
finite complex $K'$. Since there are only finitely many normal surfaces of given weight in $K'$ up to
isotopy, Proposition~\ref{loc fin} follows.
\end{proof}

\section{Seifert submanifolds}\label{sec:seifert}

We will use classical results on Seifert fiber spaces. For proofs we refer
to~\cite{jaco:lect}~and~\cite{js:seifert}.

We call a Seifert \var\ \bydef{large} if its Seifert fibration is unique up to isotopy. Recall
that if $\Sigma$ is compact with nonempty \incomp\ boundary, then $\Sigma$ is
large unless $\Sigma$ is homeomorphic to the thickened torus $T^2\times I$ or
the twisted $I$-bundle over the Klein bottle $K^2\tilde\times I$.

Below is the proposition that justifies the definition of special curves on noncanonical tori.
\begin{prop}\label{special}
Let $T$ be an \incomp\ torus in $M$. Assume that $T$ is not canonical, and not Seifert fibered.
Then there is a unique free isotopy class $\xi(T)$ of simple closed curves  in $T$ such that
if $T'$ is an \incomp\ torus which is not homotopically disjoint from $T$ and intersects $T'$
minimally, then $T\cap T'$ consists of curves belonging to $\xi(T)$. 
\end{prop}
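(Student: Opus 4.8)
\emph{Strategy and set-up.} The plan is to define $\xi(T)$ using one torus that witnesses the failure of $T$ to be canonical, and then to show it is the correct class for \emph{every} such witness, the key tool being a cut-and-paste argument inside a Seifert fibred regular neighbourhood. Since $T$ is not canonical, I pick an \incomp\ torus $T_0$ that is not homotopically disjoint from $T$, and put $T$ and $T_0$ in minimal position. An innermost-disk argument, using that $M$ is \irr\ and that $T$ and $T_0$ are \incomp, shows that every component of $T\cap T_0$ is essential in $T$; being pairwise disjoint essential simple closed curves on a torus, they are mutually isotopic in $T$, and I take $\xi(T)$ to be their free isotopy class. I shall also use the following elementary fact: an isotopy of an \incomp\ torus $T'$ that reduces $|T\cap T'|$ only deletes components of $T\cap T'$, leaving the isotopy classes in $T$ of the remaining ones unchanged; since minimal position is unique up to isotopy, it follows that if for \emph{some} position of $T'$ the curves of $T\cap T'$ form a nonempty family of curves essential and mutually isotopic in $T$, then the curves of $T\cap T'$ in minimal position all lie in that same class. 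Thus I may compute the intersection slope on $T$ using any convenient representative of the isotopy class of $T'$.

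\emph{The Seifert neighbourhood.} Let $\Sigma_0$ be a thin regular neighbourhood of $T\cup T_0$. Near each curve of $T\cap T_0$ it is the product of that curve with a disk meeting $T$ and $T_0$ in two diameters, and away from $T\cap T_0$ it is the product of an annulus with an interval; assembling these descriptions presents $\Sigma_0$ as a circle bundle \emph{without} exceptional fibres over a connected compact surface $B_0$ with $\partial B_0\neq\emptyset$, in which $T$ and $T_0$ are vertical and the regular fibre is freely homotopic to $\xi(T)$. A direct count of the pieces gives $\chi(B_0)=-\,|T\cap T_0|<0$. Moreover $\Sigma_0$ is not \homeoic\ to $D^2\times S^1$, to $T^2\times I$, or to $K^2\tilde\times I$, since each of those contains no two \incomp\ tori intersecting essentially, whereas $\Sigma_0$ contains $T$ and $T_0$, which do. Hence the structure theory of incompressible surfaces applies in $\Sigma_0$.

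\emph{The dichotomy.} Let $T'$ be any \incomp\ torus that is not homotopically disjoint from $T$, and suppose, for contradiction, that the curves of $T\cap T'$ in minimal position are \emph{not} isotopic in $T$ to $\xi(T)$. First, $T'\not\subset\Sigma_0$, for otherwise $T'$ would be a closed \incomp\ surface in $\Sigma_0$, hence a vertical torus, hence homotopically disjoint from the vertical torus $T$. So, after isotoping $T'$ to meet $\partial\Sigma_0$ transversely in the least possible number of curves, $F:=T'\cap\Sigma_0$ is nonempty (as $T'$ meets $T\subset\Sigma_0$), each of its components has nonempty boundary, and by standard arguments $F$ is \incomp\ and $\partial$-incompressible in $\Sigma_0$ with every curve of $F\cap\partial\Sigma_0$ essential in $\partial\Sigma_0$. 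By the structure theory of incompressible, $\partial$-incompressible surfaces in Seifert fibred spaces, each component of $F$ is isotopic in $\Sigma_0$ to a vertical or to a horizontal surface. If every component is vertical, then, after isotoping $F$ to be genuinely vertical, $T\cap T'$ is a nonempty union of fibres of $\Sigma_0$, hence of curves essential in $T$ and all isotopic there to $\xi(T)$; by the elementary fact above the minimal-position curves of $T\cap T'$ then lie in $\xi(T)$, a contradiction. Hence some component of $F$ is horizontal. Since $\Sigma_0$ has no exceptional fibres, a horizontal surface in it is an honest covering of $B_0$, so it has Euler characteristic $\le\chi(B_0)<0$, while every vertical component of $F$ is an annulus; therefore $\chi(F)<0$, and since $\chi(T')=0$ we get $\chi(T'\setminus\Sigma_0)=-\chi(F)>0$. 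A subsurface of a torus with positive Euler characteristic is a disjoint union of disks, so $T'\setminus\Sigma_0$ consists of disks $\Delta_1,\dots,\Delta_m$, each with $\partial\Delta_j$ essential on some component $S_j$ of $\partial\Sigma_0$ and with $\Int\Delta_j$ disjoint from $\Sigma_0$. Thus $S_j$ is compressible in $M$; since $M$ is \irr\ and $\Sigma_0$ is not contained in a solid torus (it contains the \incomp\ torus $T$), $S_j$ bounds a solid torus $V_j$ that meets $\Sigma_0$ only along $S_j$, and $\partial\Delta_j$ is a meridian of $V_j$. This meridian is not a regular fibre of $\Sigma_0$, because a fibre is freely homotopic to a component of $T\cap T_0$, which is essential in the \incomp\ torus $T$ and hence not null-homotopic in $M$, whereas $\partial\Delta_j$ bounds $\Delta_j$. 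Consequently the Seifert fibration of $\Sigma_0$ extends over every $V_j$ (possibly creating one new exceptional fibre), and $\Sigma_1:=\Sigma_0\cup\bigcup_j V_j$ is a Seifert fibred \svar\ of $M$ containing $T$ in its interior, contradicting the hypothesis that $T$ is not Seifert fibred. In either case we reach a contradiction, so the curves of $T\cap T'$ in minimal position always lie in $\xi(T)$; in particular this class does not depend on the auxiliary torus $T_0$, which yields both the asserted property and the uniqueness of $\xi(T)$.

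\emph{The main obstacle.} I expect the delicate part to be the horizontal case: one must genuinely manufacture a Seifert fibred \svar\ of $M$ that swallows $T$, by filling in the solid tori bounded by the compressible components of $\partial\Sigma_0$ and checking that the fibration extends across them, which is exactly the point where the non-null-homotopy in $M$ of the fibres of $\Sigma_0$ is used. The positioning inputs (minimal position of incompressible surfaces, incompressibility and $\partial$-incompressibility of $T'\cap\Sigma_0$, the vertical/horizontal structure theorem, and the slope-invariance fact above) are standard, but must be assembled with some care.
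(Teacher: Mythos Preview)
Your horizontal case does not close. You build $\Sigma_1=\Sigma_0\cup\bigcup_jV_j$ and assert that this ``contradict[s] the hypothesis that $T$ is not Seifert fibred''; but that hypothesis is about $M$, not about $T$ (a torus is trivially Seifert fibered, and in any event $T$ already lies in the Seifert manifold $\Sigma_0$), so nothing has been contradicted. What your Euler-characteristic argument actually yields is $T'\subset\Sigma_1$, and from there one \emph{can} finish: $\Sigma_1$ is compact with nonempty boundary (as $M$ is open) and contains the two nonisotopic incompressible tori $T,T_0$, hence is neither a solid torus nor $T^2\times I$ nor $K^2\tilde\times I$; thus its Seifert fibration is unique, $T'$ is isotopic in $\Sigma_1$ to a vertical torus, and $T\cap T'$ lies in $\xi(T)$ after all. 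So your strategy is sound, but the decisive step is missing and replaced by a misreading of the hypothesis. (There is also a small slip earlier: when $T'\subset\Sigma_0$ you claim a vertical $T'$ is homotopically disjoint from $T$, which is false---$T_0$ itself is a counterexample; the correct remark is that vertical tori meet, if at all, in fibres, which already gives $T\cap T'\subset\xi(T)$.)

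The paper proceeds differently. It caps off \emph{all} compressible boundary tori of the regular neighborhood at the outset, obtaining a large Seifert submanifold $\Sigma$ with incompressible boundary, and then analyses the second torus $T''$ only near $T$: cutting $\Sigma$ along $T$ and looking at the two annuli of $T''$ adjacent to a fixed curve $\eta\subset T\cap T''$, one shows that if they are horizontal in $\Sigma\setminus N(T)$, a component of $\Sigma\setminus N(T)$ must be $T^2\times I$, whence those annuli are boundary-parallel and $T\cap T''$ was not minimal. Your (corrected) route is more global---it traps all of $T'$ inside a Seifert piece and then invokes largeness---whereas the paper's argument is local to a neighborhood of $T$ inside $\Sigma$.
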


\begin{proof}
Let $T'$ be an \incomp\ torus which intersects $T$ essentially and minimally. Then all components
of $T\cap T'$ are essential on $T$. Since $T$ is a torus, they are all freely isotopic. Let
$\xi(T)$ be their free isotopy class. All we have to do is check that $\xi(T)$ does not depend
on the choice of $T'$.

Let $\Sigma'$ be a regular neighborhood of $T\cup T'$. Then $\Sigma$ carries an obvious
fibration such that $T\cap T'$ is a union of fibers. Since $T,T'$ are \incomp\ in $M$, the generic fiber of this Seifert fibration is not contractible in $M$. The components of $\Sigma'$ are tori
$T_1,\ldots,T_k$. Let $T_i$ be one of them. Since $T_i$ contains a fiber of the Seifert fibration,
it is not contained in a $3$-ball. Hence either $T_i$ is \incomp\ in $M$ or $T_i$ bounds a solid torus $V_i$. In the latter case, since $\Sigma'$ contains \incomp\ tori, $V_i$ cannot contain $\Sigma'$, which implies that its interior is disjoint from $\Sigma'$.

Let  $\Sigma$ be the manifold obtained from $\Sigma'$ by capping off the solid torus $V_i$ for
each \comp\ $T_i$. Since the generic fiber of $\Sigma'$ is noncontractible
in $M$, the Seifert fibration of $\Sigma'$ extends to $\Sigma$. Furthermore, $\bord \Sigma$ is \incomp\ in $M$. Since $M$ is open, $\bord\Sigma$ is not empty. Since $\Sigma$ contains two nonisotopic \incomp\ tori, $\Sigma$ must be large.

Let $T''$ be another \incomp\ torus meeting $T$ essentially and minimally. Let $\eta$ be a component
of $T\cap T''$. Our goal is to show that $\eta\in\xi(T)$. For this it is enough to prove that
$\eta$ is freely homotopic to the generic fiber of $\Sigma$.

Let $\Sigma''$ be the complement of a regular neighborhood of $T$ in $\Sigma$. Then
$\Sigma''$ inherits a Seifert fibration. Call $T_1,T_2$ the two components of $\bord\Sigma''$
parallel to $T$. Set $U:=T''\cap\bord\Sigma''$. By an isotopy of $T''$ fixing $\eta$, we may
assume that $U$ consists entirely of essential curves. Call $\eta_1,\eta_2$ the two
components of $U$ parallel to $\eta$ (with $\eta_i\subset T_i$).

Observe that $T\cap\Sigma''$ consists of finitely many annuli connecting the various components of $U$. Let $A_1,A_2$ be the annuli containing $\eta_1,\eta_2$ respectively. If at least one of these annuli
are vertical in $\Sigma''$, then we are done. Assume they are horizontal. Then $\Sigma''$ cannot be connected, for if it were, then it would have at least three boundary components, but such Seifert manifolds cannot contain horizontal annuli. Hence $\Sigma''$ has two components. Call $\Sigma_i$ the one that contains $T_i$, for $i=1,2$. At least one of them, say $\Sigma_1$, has more than one
boundary component. Hence the base orbifold of $\Sigma_1$ must be a nonsingular annulus,
which shows that $\Sigma_1\cong T^2\times I$. Now each component of $\Sigma_1\cap T''$ is
an \incomp\ annulus connecting $T_1$ to itself, so these annuli must be boundary-parallel. This
contradicts the minimality of $T''\cap T$.
\end{proof}

Note that in the course of the proof we have shown:
\begin{lem}\label{create Seifert}[cf.~\cite[Lemma 3.2]{scott:fake}
Let $T,T'$ be two \incomp\ tori which intersect essentially and minimally. Let
$U$ be a regular neighborhood of $T\cup T'$. Then $U$ is
contained in some large Seifert \svar\ $\Sigma$ whose boundary is \incomp\ in $M$, and
such that any special curve on $T$ is isotopic to a fiber of $\Sigma$.
\end{lem}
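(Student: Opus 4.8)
The plan is to extract this directly from the proof of Proposition~\ref{special}, since the construction carried out there already produces the desired $\Sigma$ once one sets $\Sigma':=U$. Note first that $T$ is automatically noncanonical (it meets the \incomp\ torus $T'$ essentially), so special curves on $T$ are defined; and since $T$ and $T'$ meet essentially and minimally, all components of $T\cap T'$ are essential simple closed curves on $T$, hence mutually freely isotopic, and by Proposition~\ref{special} their common free isotopy class is $\xi(T)$. Thus it suffices to exhibit a large Seifert \svar\ $\Sigma\supset U$ with \incomp\ boundary in $M$ in which the curves of $T\cap T'$ are isotopic to generic fibers.

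First I would equip $U$ with its ``obvious'' circle fibration: near each component of $T\cap T'$ a regular neighborhood looks like a circle times four quadrants, fibered by the circles parallel to $T\cap T'$, while away from $T\cap T'$ the pieces of $T$ and of $T'$ are open annuli whose neighborhoods are fibered by circles parallel to the cores; these local pictures agree along $T\cap T'$, so $U$ is a circle bundle over a surface with $T\cap T'$ a union of fibers. This is the content of~\cite[Lemma 3.2]{scott:fake} and is the one step that is not pure bookkeeping. Since $T$ is \incomp\ and the intersection curves are essential on $T$, the generic fiber is noncontractible in $M$; in particular $\partial U$ is a disjoint union of tori $T_1,\dots,T_k$, none of which lies in a ball.

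Because $M$ is \irr, each $T_i$ is either \incomp\ in $M$ or bounds a solid torus $V_i$, and in the latter case $\Int V_i\cap U=\emptyset$ (otherwise $U\subset V_i$, impossible since $U$ contains the \incomp\ torus $T$). Capping $U$ off with the $V_i$ over all \comp\ boundary components gives $\Sigma\supset U$; on each such $T_i$ the fiber is not a meridian of $V_i$, being noncontractible in $M$, so the fibration extends over $V_i$ with at most one exceptional fiber and $\Sigma$ is Seifert fibered. Its boundary consists of the \incomp\ $T_i$'s, hence is \incomp\ in $M$, and is nonempty because $M$ is open. Finally $\Sigma$ contains the two nonisotopic \incomp\ tori $T$ and $T'$, so it is neither $T^2\times I$ nor $K^2\tilde\times I$, and by the characterization recalled above a compact Seifert manifold with nonempty \incomp\ boundary that is not of one of these types is large. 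Since $T\cap T'$ is a union of generic fibers of $U$, hence of $\Sigma$, every special curve on $T$ is isotopic to a fiber of $\Sigma$, which proves the lemma. The only genuine obstacle is producing the circle fibration on $U$ in the first step; the rest repeats the reasoning of Proposition~\ref{special}, together with the remark that the presence of two nonisotopic \incomp\ tori in $\Sigma$ is what upgrades it to ``large''.
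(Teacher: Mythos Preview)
Your proposal is correct and follows exactly the approach the paper takes: the paper's ``proof'' of this lemma consists of the single remark that it was already established in the course of proving Proposition~\ref{special}, and your write-up is precisely an extraction of that argument (set $\Sigma':=U$, observe the obvious circle fibration with $T\cap T'$ as fibers, cap off compressible boundary tori by solid tori over which the fibration extends, and conclude largeness from the presence of two nonisotopic incompressible tori). Your added observation that $T$ is automatically noncanonical and that $T\cap T'\subset\xi(T)$ makes explicit why special curves are fibers, which the paper leaves implicit.
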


A similar argument shows:
\begin{lem}
Let $T$ be a noncanonical \incomp\ torus in $M$ contained in some large Seifert \svar\ $\Sigma$.
Then any special curve on $T$ is isotopic to a regular fiber of $\Sigma$.
\end{lem}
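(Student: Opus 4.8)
The plan is to reduce the statement to Proposition~\ref{special}, once we know that $T$ is vertical in $\Sigma$. To establish that: since $\Sigma$ is a compact \svar\ of the open connected \var\ $M$, its boundary $\bord\Sigma$ is nonempty; as $T$ is \incomp\ in $M$ it is \incomp\ in $\Sigma$, so by the classification of incompressible surfaces in Seifert \var s it is isotopic in $\Sigma$ to a vertical torus or to a horizontal surface, but a horizontal surface meets every fibre, hence meets $\bord\Sigma$, and so is never closed --- therefore $T$ is vertical. Fix the Seifert fibration $\calF$ of $\Sigma$ (unique up to isotopy, $\Sigma$ being large) and isotope $T$ to be $\calF$-saturated; let $f$ be the free isotopy class in $T$ of a regular fibre of $\calF$, which is essential on $T$ since $T$ is \incomp\ in $\Sigma$. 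The conclusion of the lemma is exactly that $\xi(T)=f$. Since $T$ is noncanonical, $\xi(T)$ is well defined by Proposition~\ref{special}, and by the uniqueness there it suffices to produce a single \incomp\ torus $T'$ that meets $T$ essentially and minimally and with $T\cap T'$ a union of curves isotopic to $f$.

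For this, take any \incomp\ torus $T'$ that is not homotopically disjoint from $T$ --- one exists because $T$ is noncanonical --- put it in general position with respect to $\bord\Sigma$, and, using that $M$ is \irr\ and that $T'$ is \incomp\ in $M$, arrange by an isotopy of $T'$ that every circle of $T'\cap\bord\Sigma$ is essential on $T'$ and that no component of $T'\cap\Sigma$ is $\bord$-parallel in $\Sigma$. There are three cases. If $T'\cap\Sigma=\emptyset$, then $T'\subset M\setminus\Int\Sigma$, so $T'\cap T=\emptyset$ (as $T\subset\Int\Sigma$), contradicting that $T$ and $T'$ intersect essentially. If $T'\subset\Int\Sigma$, then $T'$ is \incomp\ in $\Sigma$, and it is not horizontal (being closed in a Seifert \var\ with nonempty boundary); so it is vertical and, after an isotopy, $\calF$-saturated. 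Otherwise the circles of $T'\cap\bord\Sigma$ are essential on the torus $T'$ and hence mutually parallel, so $T'\cap\Sigma$ is a nonempty disjoint union of annuli; these are \incomp\ in $\Sigma$ (their cores are essential on $T'$, hence in $M$) and, by the cleanup, not $\bord$-parallel, so they are essential annuli. As $\Sigma$ is large it is not a solid torus, $T^2\times I$, or $K^2\tilde\times I$, and the classification of essential annuli in Seifert \var s then makes each component of $T'\cap\Sigma$ isotopic to a vertical annulus; after an isotopy, $T'\cap\Sigma$ is $\calF$-saturated.

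In the last two cases $T'$ has been isotoped so that $T'\cap\Sigma$ is $\calF$-saturated, and one can do this while keeping $T$ $\calF$-saturated (carry out the straightening of the vertical annuli in the complement of a vertical collar of $T$, or argue with $\Sigma$ cut along such a collar exactly as in the proof of Proposition~\ref{special}). Then $T\cap T'=T\cap(T'\cap\Sigma)$ is an intersection of two $\calF$-saturated surfaces, hence a union of fibres of $\calF$ --- all regular, since a saturated torus contains no exceptional fibre --- each essential on $T$; and $T\cap T'\neq\emptyset$, for otherwise $T$ and $T'$ would be disjoint. Bringing $T\cap T'$ to minimal position only removes parallel families of such curves, so it leaves a nonempty union of curves isotopic to $f$. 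By the reduction above, $\xi(T)=f$, which is the claim.

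The step I expect to be the main obstacle is the third case of the second paragraph: recognizing $T'\cap\Sigma$ as a union of vertical annuli. This rests on the classification of essential annuli in Seifert \var s with \incomp\ boundary, on the fact that a horizontal essential annulus occurs only in $T^2\times I$ or $K^2\tilde\times I$ --- exactly the Seifert \var s that fail to be large --- and on the preliminary cleanup discarding the compressible and $\bord$-parallel pieces of $T'\cap\Sigma$. A secondary technical point is coordinating the verticalizations of $T$ and of $T'\cap\Sigma$ with respect to $\calF$. Finally, when $T$ happens to be boundary-parallel in $\Sigma$ the same argument applies, but this is the case in which the excursion of $T'$ into $M\setminus\Sigma$ is genuinely needed, since then $\Sigma$ itself contains no torus that meets $T$ essentially, so the desired $T'$ must come from the complement of $\Sigma$.
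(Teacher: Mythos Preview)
Your argument is correct. The paper does not spell out a proof --- it only says ``a similar argument shows,'' meaning one should rerun the proof of Proposition~\ref{special} with the given large $\Sigma$ in place of the Seifert \svar\ constructed there. That approach starts with a torus $T'$ already in \emph{minimal} position with respect to $T$, cuts $\Sigma$ open along a regular neighborhood of $T$, looks at the two annuli of $T'$ adjacent to the parallel copies $T_1,T_2$ of $T$, and rules out the horizontal case by showing the relevant component would be $T^2\times I$, forcing those annuli to be boundary-parallel and contradicting minimality of $T\cap T'$.

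Your route is genuinely different in its order of operations: you take an arbitrary $T'$ meeting $T$ essentially, verticalize $T'\cap\Sigma$ wholesale via the classification of essential annuli in large Seifert \var s, and only afterwards pass to minimal intersection. This is sound, but it creates the two technical wrinkles you yourself flag --- coordinating the verticalization of $T'\cap\Sigma$ with that of $T$, and checking that reducing to minimal intersection cannot change the isotopy class of the curves on $T$. Both hold (the second because reducing the intersection of two \incomp\ tori whose intersection curves are all essential proceeds by pushing across product regions, which only deletes pairs of parallel curves), but the paper's ``minimize first, then analyze locally near $T$'' order sidesteps them entirely. One small inaccuracy: you assert that $\Sigma$ is compact to get $\bord\Sigma\neq\emptyset$, but compactness is not hypothesized; what you actually need, and what does follow from $M$ being open and connected, is that $\Sigma$ is not a closed manifold, so its base orbifold is not closed and no closed horizontal surface exists.
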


Next is a lemma that will enable us to enlarge a \svar\ $\Sigma$
of $M$ so as to engulf all \incomp\ tori that are homotopic into $\Sigma$. 
\begin{lem}\label{region X}
Let $\Sigma\subset M$ be a \svar\ bounded by PL minimal \incomp\ tori. Let $T$ be a component
of $\bord\Sigma$. Assume that $\Sigma$ is not a thickened torus. Then one of the following holds:
\begin{enumerate}
\item There is a \svar\ $X$ homeomorphic to $T^2\times [0,\infty)$ such that $X\cap\Sigma=T$, or
\item All least area tori isotopic to $T$ lie in $\Sigma$, or
\item There is an \incomp\ least PL area torus $T'$ isotopic to $T$ such that $T$ and
$T'$ cobound a thickened torus $X$ with $X\cap\Sigma-T$, and all least area tori isotopic
to $T$ lie in $\Sigma\cup X$, or
\item There is a  \svar\ $X$ homeomorphic to $K^2\tilde\times I$ such that
$X\cap \Sigma = T$.
\end{enumerate}
\end{lem}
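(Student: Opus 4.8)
The plan is to analyze the region of $M$ lying on the side of $T$ away from $\Sigma$, using the hypothesis that $T$ is a least PL area torus together with the classification of ends of irreducible open $3$-manifolds that admit an essential torus near the boundary. First I would cut $M$ along $T$ and let $N$ denote the closure of the component not containing $\Int\Sigma$ (or the unique component, if $T$ is nonseparating, suitably interpreted). Since $T$ is incompressible and $M$ is open and irreducible, $N$ is an open irreducible $3$-manifold with a single incompressible torus boundary component. The dichotomy in the statement should come from asking whether $N$ contains an incompressible torus isotopic to $T$ and not isotopic into $\partial N$: if not, we will land in case (i) or (iv); if so, we push the boundary of $\Sigma$ across and get case (iii), while case (ii) is the degenerate situation where no least area torus strays into $N$ at all.

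Here are the steps I would carry out in order. First, invoke Proposition~\ref{pl stuff}(iii): all least PL area tori isotopic to $T$ are pairwise disjoint or equal, so they form a discrete linearly ordered family of parallel tori; the relevant question is whether any of them lies in $N$, i.e.\ strictly on the far side of $T$. If none does, we are in case (ii). Second, suppose some least area torus $T_1\subset N$ is isotopic to $T$. Then $T$ and $T_1$ cobound a compact submanifold $W\subset N$; since both are incompressible and isotopic, $W$ is a thickened torus $T^2\times I$ by Waldhausen (homotopy implies isotopy), and because $T,T_1$ both have least PL area, the standard innermost/outermost argument shows no least area torus isotopic to $T$ can lie in $\Int W$. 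Choosing $T_1$ to be the one closest to $T$ among all least area tori in $N$ and setting $X=W$ gives exactly case (iii), provided the collection of least area tori in $N$ has no others beyond $X$; if it does, one iterates, but local finiteness of least weight tori (Proposition~\ref{loc fin}, using the uniform weight bound coming from $T$ being least area) guarantees the process terminates or exhausts an end, which is precisely the content of (iii) as stated (``all least area tori isotopic to $T$ lie in $\Sigma\cup X$''). Third, if no least area torus lies in $N$ but $N$ nevertheless contains \emph{some} incompressible torus isotopic to $T$ that is not boundary-parallel in $\partial N$ — this cannot happen, since such a torus could be homotoped to least PL area and would then lie in $N$, contradiction; so in the remaining case every incompressible torus in $N$ parallel to $T$ is boundary-parallel, and one analyzes the end of $N$ abutting $T$. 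By a standard argument (Tucker, or the half-lives-half-dies plus incompressibility), a neighborhood of this end is either a product $T^2\times[0,\infty)$, giving case (i), or a half-open twisted $I$-bundle neighborhood $K^2\tilde\times I$ capping $T$ off (when $T$ double-covers a one-sided Klein bottle in $N$), giving case (iv).

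The main obstacle I expect is Step three: showing that the absence of least area tori strictly inside $N$ forces the end of $N$ near $T$ to be a torus product or a twisted $I$-bundle over the Klein bottle, and nothing else. The delicate point is that $N$ has infinite topological type in general, so one cannot simply quote a compact-core statement; one must argue that the \emph{germ} of the end adjacent to the incompressible torus $T$ is controlled — essentially that $T$ being least area prevents any ``interesting'' topology from accumulating on it from the $N$ side without producing a competing least area torus. I would handle this by taking an exhaustion of $N$ by compact irreducible Haken submanifolds containing $T$ in their boundary, applying the compact JSJ/Seifert theory to each, and checking that the incompressibility of $T$ together with minimality of its PL area forces the piece adjacent to $T$ to stabilize to $T^2\times I$ or $K^2\tilde\times I$; the thickened-torus exclusion of case "$\Sigma$ is a thickened torus" in the hypothesis is exactly what is needed to avoid a vacuous or circular conclusion here. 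Cases (i) and (iv) are then distinguished by whether $T$ is one- or two-sided in the resulting $I$-bundle, equivalently whether $T$ bounds on the $N$ side or double-covers a Klein bottle there.
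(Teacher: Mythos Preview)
Your core idea is the paper's idea: least PL area tori isotopic to $T$ are pairwise disjoint (Proposition~\ref{pl stuff}(iii)) and all have the same weight, so by Proposition~\ref{loc fin} only finitely many meet any compactum; hence an infinite nested family of product regions $X_n$ between $T$ and successive least area tori must exhaust an end. That is exactly how the paper proves the lemma, in four lines.

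Where you go astray is in the case bookkeeping. In your step~2 you correctly observe that the iteration either ``terminates or exhausts an end,'' but you then label \emph{both} outcomes as case~(iii). Exhausting an end is precisely case~(i). Once you see this, your step~3 evaporates: there is no ``remaining case'' to analyze, and the Tucker/compact-core/exhaustion-by-Haken-pieces machinery you flag as the main obstacle is never needed. The paper's argument is simply: suppose (ii), (iii), (iv) all fail; then the inductive construction of $X_n$ never stops, local finiteness forces the $T_n$ to leave every compactum, and $\bigcup X_n$ is the $T^2\times[0,\infty)$ of case~(i).

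Your handling of case~(iv) is also misplaced. It does not arise from an end-structure dichotomy on $N$; in the paper it is just one of the three alternatives assumed \emph{not} to hold before starting the iteration. Its role is to cover the situation where $T$ bounds a twisted $I$-bundle over a Klein bottle on the outside, which would obstruct producing an \emph{infinite} increasing chain of product regions. Finally, note that you never used the hypothesis that $\Sigma$ is not a thickened torus; it is what guarantees the product region between $T$ and each $T_n$ lies on the side away from $\Sigma$, so that $X_n\cap\Sigma=T$.
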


\begin{proof}
By Proposition~\ref{pl stuff}(iii), least area \incomp\ tori are equal or disjoint.
Thus if neither of cases (ii), (iii)  and (iv) holds, then one can inductively construct an infinite sequence of
least area tori $T_n$ such that each $T_n$ cobounds with $T$ a thickened torus $X_n$, with the
property that $X_n\cap \Sigma=T$, and $\Sigma\cup X$ does not contain all PL least area
tori isotopic to $T$, and $X_n\subset X_{n+1}$. Since all $T_n$'s have the same weight,
a compact subset of $M$ can contain at most finitely of them, by Proposition~\ref{loc fin}. Hence the $X_n$ exhaust
a tame end of $M$ and conclusion (i) holds.
\end{proof}

\section{Graph submanifolds}\label{sec:graph}

For the purposes of this paper, a \bydef{graph \svar} of $M$ is a $3$-\svar\ $\Sigma\subset M$ that contains a locally finite collection of pairwise disjoint, least weight normal canonical tori $\mathcal G$
such that each component of $\Sigma$ split along $\mathcal G$ is Seifert fibered.
This notion is needed in order to address a technical difficulty caused by nonseparating tori.

We now formulate the main technical result of this article, from which
Theorem~\ref{th:main} will follow.

\begin{theo}~\label{th:graph}
Under the hypotheses {\rm A, B, C} and {\rm D}, there exists a \svar\ $\Sigma\subset M$
(possibly empty or equal to $M$) with the following properties~:
\begin{enumerate}
\item $\Sigma$ is a graph submanifold of $M$.
\item If $T$ is a noncanonical least PL area \incomp\ torus in $M$, then $T\subset\Sigma$.
\item If $F$ is a least PL area canonical torus or a least PL area Klein bottle, then
either $F\subset \Sigma$ or  $F\cap\Sigma=\emptyset$.
\end{enumerate}
\end{theo}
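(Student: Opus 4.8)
The plan is to construct $\Sigma$ as an increasing union of graph submanifolds, starting from the empty set and repeatedly engulfing noncanonical tori via the Seifert-neighborhood construction of Lemma~\ref{create Seifert}, then taking the closure and checking that it is still a manifold. First I would fix, once and for all, a representative least PL area normal torus in each homotopy class of noncanonical incompressible tori (possible by Proposition~\ref{pl stuff}(ii) together with the Remark, which rules out the Klein-bottle-cover obstruction for noncanonical tori); call this collection $\calN$. By Proposition~\ref{pl stuff}(iii), any two members of $\calN$ are either disjoint or intersect minimally. The idea is to build $\Sigma$ so that it contains all of $\calN$: start with one torus $T_1\in\calN$ meeting some compact set, and whenever two tori in $\calN$ that are already captured intersect essentially, enlarge $\Sigma$ to a large Seifert submanifold containing a regular neighborhood of their union (Lemma~\ref{create Seifert}), then further enlarge using Lemma~\ref{region X} to absorb all least-area tori isotopic to boundary components. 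Hypothesis~C is the mechanism that forces this process to propagate: whenever a captured torus $T$ meets a $3$-simplex $\sigma$ in more than one disk, there is a uniformly small torus $T'$ meeting $T$ essentially near $\sigma$, so $T'$ (being homotopic to a member of $\calN$) also gets captured, and the Seifert structure spreads across $\sigma$.

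The heart of the argument is a \emph{local finiteness} control. The Seifert submanifolds built from pairs of tori have uniformly bounded weight on their \emph{fibers} (hypothesis~B), uniformly bounded weight on the \emph{generating small tori} (hypothesis~C via $C_3$), and the various pieces that need to be patched are uniformly close (hypotheses C and D via $C_4$). I would combine these with Lemma~\ref{myineq} and Proposition~\ref{loc fin} to show that, inside any fixed compact set $K$, only finitely many ``enlargement moves'' are needed, and that the boundary tori produced along the way have uniformly bounded weight; hence the collection of walls $\mathcal G$ of the resulting graph structure is locally finite. This is where hypothesis~D earns its keep: when two disjoint least-area noncanonical tori $T,T'$ each meet a common $3$-simplex in a single disk but belong to \emph{different} Seifert pieces being assembled, D produces an annulus joining a special curve of $T$ to one of $T'$ with an essential arc inside $\sigma$; this annulus glues the two pieces into a single Seifert (or graph) submanifold, so that the closure of the union of all the pieces does not develop a non-manifold locus or a frontier torus that fails to be incompressible. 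Conversely, the absence of such an annulus (ruled out by D precisely when both intersections are single disks) is what would let two Seifert pieces accumulate on a common frontier and destroy manifold-ness; D says this cannot happen.

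Concretely, I would organize the proof as follows. (1) Set up $\calN$ and the ``capture relation'': $T\in\calN$ is \emph{engulfed} at stage $n$ if $T$ lies in the graph submanifold $\Sigma_n$ constructed so far. (2) Define the enlargement step: given $\Sigma_n$ and a torus $T\subset\Sigma_n$ with $T\cap\sigma$ disconnected for some $\sigma$, invoke C to get $T'$, use Lemma~\ref{create Seifert} on $T,T'$, then Lemma~\ref{region X} on each new boundary component to either stop (cases (ii),(iii),(iv)) or recognize a product end (case~(i)); in case~(i) the end is tame and we simply stop enlarging in that direction. Use D to amalgamate pieces sharing a $3$-simplex. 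Set $\Sigma_{n+1}$ to be the result. (3) Prove the key claim: the $\Sigma_n$ stabilize over every compact set, using bounded weight of fibers (B), of small tori (C), and of frontier tori (from Lemma~\ref{region X}(iii) the new torus is least area hence of the same weight class), feeding into Proposition~\ref{loc fin}; so $\Sigma:=\bigcup_n\Sigma_n$ is a locally finite graph submanifold, giving~(i). (4) Property~(ii): any $T\in\calN$ not in $\Sigma$ would intersect some $T''\in\calN\cap\Sigma$ essentially (else $T$ would be homotopically disjoint from everything in its ``cluster'', forcing it canonical by definition of $\xi$ and Proposition~\ref{special}), and then the enlargement step would have captured it — contradiction; one must also handle a $T$ disjoint from all of $\Sigma$, which is where hypothesis~C and the connectivity of how tori chain together through small tori is used to show $T$ is engulfed or else lies beyond a product end, contradicting that $T$ meets the compact core. (5) Property~(iii): a least PL area canonical torus or Klein bottle $F$, being canonical, is homotopically disjoint from every noncanonical torus; by Proposition~\ref{pl stuff}(iii) its least-area representative is disjoint from all least-area noncanonical tori, hence disjoint from the walls $\mathcal G$ and from the Seifert pieces' sweeping tori, so $F$ lies in a single complementary region or inside $\Sigma$; a small additional argument (using that $\partial\Sigma$ consists of least-area tori and Proposition~\ref{pl stuff}(iii) again) shows $F$ cannot straddle $\partial\Sigma$, giving the dichotomy.

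The main obstacle I expect is step~(3) — proving that the enlargement process is locally finite and terminates over each compact set. The danger is an infinite regress where capturing $T$ forces capturing $T'$, which forces $T''$, and so on, with the pieces marching off but their \emph{frontier} accumulating inside a fixed $K$. Hypothesis~C bounds the weight of each newly summoned small torus, and Lemma~\ref{region X} controls what happens at each frontier torus, but one must still argue that only finitely many \emph{distinct} isotopy classes of walls meet $K$ — this requires carefully bounding the weight of \emph{all} intermediate boundary tori (not just the generators), propagating the bound through the capping-off in Lemma~\ref{region X}(iii) and the amalgamations of hypothesis~D, and then quoting Proposition~\ref{loc fin}. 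Getting a clean uniform bound that survives an unbounded number of amalgamation moves — rather than one that degrades geometrically with the number of moves — is the delicate point, and is presumably why $C_3$ and $C_4$ are phrased in terms of the \emph{original} least-area torus $T$ and its distance to $\sigma$ rather than in terms of the partially-built $\Sigma_n$.
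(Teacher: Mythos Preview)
Your broad architecture---build an increasing sequence $\Sigma_n$ of taut graph submanifolds via Proposition~\ref{tautening} and Lemmas~\ref{create Seifert}, \ref{region X}, then pass to a limit---matches the paper. The genuine gap is your step~(3). You aim to show that the $\Sigma_n$ \emph{stabilize over every compact set}, equivalently that the boundary tori $\partial\Sigma_n$ have uniformly bounded weight near any fixed $K$, and then invoke Proposition~\ref{loc fin}. This is false in general, and the paper does not attempt it. The boundary tori of $\Sigma_n$ can have unbounded weight: indeed, a component $F$ of $\partial\Sigma$ may be a noncompact open annulus (Lemma~\ref{topo F}(ii)), arising as the limit of a sequence of tori $T_n\subset\partial\Sigma_n$ of growing diameter. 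So there is no uniform weight bound to feed into Proposition~\ref{loc fin}, and the $\Sigma_n$ need not stabilize on compacta.

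What the paper proves instead is a much weaker local finiteness: for each $3$-simplex $\sigma$, the normal disks of $\sigma\cap\bigcup_n\partial\Sigma_n$ fall into only finitely many \emph{equivalence classes} (Proposition~\ref{finite class}), where two disks are equivalent if some $\Sigma_n\cap\sigma$ contains both. This is where hypotheses C and D are actually spent: C shows (Lemma~\ref{same torus}) that only finitely many boundary tori meet $\sigma$ disconnectedly, and D is used twice in the proof of Proposition~\ref{finite class}---once (Lemma~\ref{same seifert}) to force two such tori into a common Seifert piece, and once to build an incompressible torus whose algebraic intersection with a frontier torus yields a contradiction. Even after this, the closure of $\bigcup\Sigma_n$ need not be a manifold; the paper performs an explicit normal-isotopy modification of the $\Sigma_n$ (shrinking each equivalence class onto a subinterval of each edge) before taking the closure. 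Finally, hypothesis~B is not used for local finiteness of the construction at all; it enters only in Section~5.2, via Lemma~\ref{lem:B} and Sublemma~\ref{unif rep}, to control the topology of the limit boundary components---showing they are incompressible and are tori or annuli. Your proposal misallocates B, C, D and, more importantly, targets a stabilization statement that the tori-converging-to-annuli phenomenon rules out.
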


In Subsection~\ref{sub:one}, we show how to deduce Theorem~\ref{th:main} from Theorem~\ref{th:graph}. In Subsection~\ref{sub:two} we introduce a notion
of \emph{taut} graph submanifold and prove an important technical result on the
existence of such submanifolds.

\subsection{Deduction of Theorem~\ref{th:main} from Theorem~\ref{th:graph}}
\label{sub:one}

Let $\Sigma$ be a graph \svar\ satisfying the conclusion of  Theorem~\ref{th:graph}.
Let $\mathcal G$ be a  locally finite collection of pairwise disjoint, least weight normal canonical tori $\mathcal G$ such that each component of $\Sigma$ split along $\mathcal G$ is Seifert fibered.

For each isotopy class of canonical tori $\xi$, pick either a PL least area representative
or a PL least area Klein bottle that is double covered by a torus of $\xi$; this yields a
collection $\mathcal C$ of PL least area surfaces embedded in $M$. By Proposition~\ref{pl stuff} those
surfaces are disjoint. Moreover, we may assume that for every $T\in\mathcal G$, if $T$ is
not homotopic to a double cover of a Klein bottle, then $T$ is least PL area and $T\in \mathcal C$.

By hypothesis~A and Proposition~\ref{loc fin}, $\mathcal C$ is locally finite.
By part~(iii) of the conclusion of Theorem~\ref{th:graph}, $\mathcal C$ is the disjoint
union of four collections $\mathcal C_1,\mathcal C_2,\mathcal C_3,\mathcal C_4$, where $\mathcal C_1$ consists
of the collection of tori in $\mathcal C_1$ that are contained in $\Sigma$,
$\mathcal C_2$ consists of the collection of those that are disjoint from $\Sigma$,
and $\mathcal C_3$ (resp.~$\mathcal C_4$) consists of the Klein bottles that are
contained in $\Sigma$ (resp.~are disjoint from $\Sigma$).

For each Klein bottle $K$ in $\mathcal C_3$, perturb $K$ to
a normal torus $T$ that bounds a \svar\ $X$ homeomorphic to $K^2\tilde\times I$, such
that $X\subset \Sigma$. Since $\mathcal C$ is locally finite, this can be done in such a way
that  all tori in the resulting collection $\mathcal C'_3$ are disjoint from
one another and from members of $\mathcal C_1$.
As a consequence, each component of $\Sigma$ split along $\mathcal C_1\cup
\mathcal C'_3$ is Seifert fibered. 

Let $\Sigma'$ be the Seifert \svar\ of $M$ obtained by taking all components
of $\Sigma$ split along $\mathcal C_1\cup\mathcal C'_3$, and adding a ``small'' regular neighborhood of each
member of $\mathcal C_2\cup\mathcal C_4$.  (Here ``small'' means that these neighborhoods
should be disjoint from one another and from $\Sigma$. Again, this is possible because
$\mathcal C$ is locally finite.)  Each component of $\Sigma'$ is
either a component of $\Sigma$ split along $\mathcal C_1\cup\mathcal C'_3$ or homeomorphic
to $T^2\times I$ or $K^2\tilde\times I$, hence $\Sigma'$ is a Seifert \svar.

\begin{claim}
Every incompressible  torus is homotopic into $\Sigma'$.
\end{claim}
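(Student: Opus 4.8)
The plan is to reduce the statement about an arbitrary incompressible torus $T$ to the dichotomy ``$T$ is canonical'' versus ``$T$ is noncanonical,'' and to handle the two cases separately using the conclusions of Theorem~\ref{th:graph}. First I would put $T$ into least PL area position in its homotopy class, which is legitimate by Proposition~\ref{pl stuff}(ii) (recall that if $T$ is homotopic to a double cover of a Klein bottle it is automatically canonical, so in the noncanonical case we really do get a least PL area torus, and in the canonical case we may instead get a least PL area Klein bottle; both are allowed for by $\mathcal C$).

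If $T$ is noncanonical, then after the homotopy $T$ is a noncanonical least PL area incompressible torus, so by Theorem~\ref{th:graph}(ii) it lies inside $\Sigma$. I then need to push it into $\Sigma'$. Since $\Sigma'$ is obtained from $\Sigma$ by cutting along $\mathcal C_1 \cup \mathcal C'_3$ and throwing away the (thin) product/twisted-$I$-bundle collars of the pieces removed, I would argue that $T$ can be isotoped off the tori of $\mathcal C_1 \cup \mathcal C'_3$: each such torus is canonical, hence homotopically disjoint from $T$, hence (being least PL area and disjoint, or after a small perturbation intersecting minimally, by Proposition~\ref{pl stuff}(iii), and then using incompressibility plus irreducibility to remove trivial intersection curves) actually disjoint from $T$ after an isotopy. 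Once $T$ is disjoint from $\mathcal C_1 \cup \mathcal C'_3$, it lies in a single component of $\Sigma$ split along that collection, i.e.\ in a component of $\Sigma'$, as desired. The one point requiring care is the $K^2\tilde\times I$ pieces $X$ corresponding to members of $\mathcal C'_3$: a torus inside such an $X$ is boundary-parallel there, so if $T$ ended up inside one of these it would be isotopic to $\partial X \subset \Sigma'$ anyway.

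If $T$ is canonical, I would take the least PL area representative $T_0$ of its isotopy class used to form $\mathcal C$ (or the Klein bottle double-covered by $T$). If $T_0 \in \mathcal C_1$, then $T_0 \subset \Sigma$; it may run into the cutting tori $\mathcal C_1 \cup \mathcal C'_3$, but since $T_0$ is itself one of the $\mathcal C_1$ tori (or disjoint from them by the same homotopic-disjointness argument as above), $T_0$ survives as a boundary torus of some component of $\Sigma'$, so $T$ is homotopic into $\Sigma'$. If $T_0 \in \mathcal C_2$, then by construction $\Sigma'$ contains a product neighborhood $T_0 \times I$, so again $T$ is homotopic into $\Sigma'$. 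The remaining cases $\mathcal C_3$ and $\mathcal C_4$ are Klein bottles: for $\mathcal C_3$ the associated torus $\partial X$ (with $X \cong K^2\tilde\times I \subset \Sigma$) is a boundary component of a piece of $\Sigma'$ and is isotopic to $T$, and for $\mathcal C_4$ the small regular neighborhood of the Klein bottle added to form $\Sigma'$ is a copy of $K^2\tilde\times I$ whose boundary torus is isotopic to $T$; in both cases $T$ is homotopic into $\Sigma'$.

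The main obstacle I anticipate is the bookkeeping in the noncanonical case: verifying that a least PL area noncanonical torus $T \subset \Sigma$ can genuinely be isotoped to be disjoint from \emph{all} of $\mathcal C_1 \cup \mathcal C'_3$ simultaneously, rather than one torus at a time, and that this isotopy keeps $T$ inside $\Sigma$. Here local finiteness of $\mathcal C$ (hypothesis~A together with Proposition~\ref{loc fin}) is essential so that only finitely many members of $\mathcal C_1 \cup \mathcal C'_3$ can meet a regular neighborhood of $T$, reducing the problem to a finite, standard innermost-disk/minimal-intersection argument of Waldhausen type inside a compact Haken piece of $M$. Everything else is a routine application of the results already established.
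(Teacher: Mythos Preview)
Your approach is correct and follows the same two-case split as the paper. The paper's own proof is extremely terse: for canonical tori it simply says ``clear,'' and for a noncanonical $T$ it invokes conclusion~(ii) of Theorem~\ref{th:graph} to get $T\subset\Sigma$, then observes that members of $\mathcal C_1\cup\mathcal C'_3$ are canonical so $T$ can be isotoped off them into $\Sigma'$. Your write-up expands exactly these two steps, making explicit the local-finiteness and Waldhausen-type arguments that the paper leaves implicit; one small inaccuracy is your description of $\Sigma'$ as ``throwing away collars''---in fact all components of $\Sigma$ split along $\mathcal C_1\cup\mathcal C'_3$ are kept---but this does not affect your argument.
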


This is clear for canonical tori. Let $T$ be a noncanonical
torus. Then $T$ can be isotoped into $\Sigma$ by conclusion~(ii) of Theorem~\ref{th:graph}. Since members of $\mathcal C_1\cup C'_3$ are canonical, $T$ can in fact be isotoped
into $\Sigma'$. This proves the claim.

In particular, every component of $M\setminus \Sigma'$ is atoroidal.
This completes the proof of Theorem~\ref{th:main} assuming Theorem~\ref{th:graph}.

\subsection{Taut graph submanifolds}
\label{sub:two}

\begin{defi}
Let $\Sigma\subset M$ be a graph \svar. It is \bydef{taut} if it has \incomp\ boundary, no two
components of $\bord\Sigma$ are parallel, every boundary torus is PL least area,
and any PL least area \incomp\ torus which can be homotoped into $\Sigma$ is
already contained in $\Sigma$.
\end{defi}

\begin{prop}\label{tautening}
\begin{enumerate}
\item If $T,T'$ are noncanonical, PL least area \incomp\ tori which intersect essentially,
then there exists a taut graph \svar\ $\Sigma$ containing both $T$ and $T'$.
\item If $\Sigma$ is a taut graph \svar\ and $T$ is a noncanonical PL least area \incomp\ torus,
then there exists a taut graph \svar\ $\Sigma'$ containing both $\Sigma$ and $T'$.
\end{enumerate}
\end{prop}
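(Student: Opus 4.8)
The plan is to build $\Sigma$ (resp.~$\Sigma'$) by starting with an obvious Seifert submanifold and then repeatedly enlarging it, using Lemma~\ref{region X} to control what happens at each boundary torus, until no further enlargement is forced. For part~(i), I would begin with a regular neighborhood $U$ of $T\cup T'$. By Lemma~\ref{create Seifert}, $U$ sits inside a large Seifert \svar\ $\Sigma_0$ with \incomp\ boundary, on which any special curve of $T$ is a fiber. I would then replace each boundary torus of $\Sigma_0$ by an isotopic PL least area torus (possible by Proposition~\ref{pl stuff}(ii)), discarding any boundary component that becomes parallel to another or gets absorbed, so as to arrange at the outset that $\Sigma_0$ is a graph \svar\ with \incomp\ boundary, no two boundary components parallel, and all boundary tori PL least area. (The graph \svar\ structure is trivial here since $\Sigma_0$ is already Seifert; the collection $\mathcal G$ is empty or a single nonseparating torus.)

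Next comes the enlargement step. For each boundary torus $S$ of the current \svar\ $\Sigma_n$ (which is not a thickened torus, as that case is handled separately), apply Lemma~\ref{region X}. Cases~(i) and~(iv) of that lemma produce a product end $T^2\times[0,\infty)$ or a piece $K^2\tilde\times I$ meeting $\Sigma_n$ only in $S$; these can be glued on without creating new essential tori outside, and in case~(i) they absorb a tame end so no further enlargement past that boundary component is ever needed. Case~(ii) means $S$ is already `maximal' and nothing must be done there. In case~(iii) one glues on a thickened torus $X$ bounded by another PL least area torus $T'$; after this, case~(ii) applies to the new boundary. In all cases, after gluing, the resulting \svar\ $\Sigma_{n+1}$ is still a graph \svar\ (each added piece is Seifert), still has \incomp\ PL least area boundary, and strictly contains $\Sigma_n$; moreover every PL least area \incomp\ torus homotopic into $\Sigma_n$ now lies in $\Sigma_{n+1}$.

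The issue is that a single pass does not obviously stabilize: after enlarging across one boundary torus, a PL least area torus that was previously outside may now become homotopic into the new submanifold through a \emph{different} boundary torus, and one must iterate. Here the key finiteness input is Proposition~\ref{loc fin} together with hypothesis~A: all the PL least area canonical tori that could ever be engulfed have uniformly bounded weight, hence form a locally finite family, and each boundary torus, being PL least area, also has bounded weight; combined with Lemma~\ref{myineq} this forces the nested sequence $\Sigma_0\subset\Sigma_1\subset\cdots$ to stabilize on every compact set, so the union $\Sigma:=\bigcup_n\Sigma_n$ is a locally finite increasing union, hence itself a graph \svar. Tautness of $\Sigma$ is then exactly the statement that the process has terminated: any PL least area \incomp\ torus homotopic into $\Sigma$ is homotopic into some $\Sigma_n$, hence contained in $\Sigma_{n+1}\subset\Sigma$; no two boundary components are parallel and each is PL least area by construction. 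For part~(ii), one runs the identical argument starting from $\Sigma_0:=\Sigma\cup N(T')$ instead — note $T'$ meets $\Sigma$ essentially or is already inside, and in the former case Lemma~\ref{create Seifert} applied to $T'$ and a torus it crosses inside $\Sigma$ again provides the initial Seifert enlargement.

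The main obstacle I anticipate is verifying that each enlargement preserves the graph \svar\ structure \emph{globally}, i.e.~that the canonical tori $\mathcal G$ recording nonseparating tori can be chosen coherently as $\Sigma_n$ grows and that the added Seifert pieces glue to the old ones along vertical tori (so that fibrations match up to the decomposition along $\mathcal G$); this is where the hypothesis that boundary tori are PL least area — so that they are genuinely \incomp\ and the Seifert structures on the two sides are compatible after possibly passing to $\mathcal G$ — does the real work, and care is needed with the $T^2\times I$ and $K^2\tilde\times I$ exceptional pieces flagged in the outline.
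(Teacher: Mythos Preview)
Your iteration-and-stabilize strategy is more elaborate than what is needed, and the termination step has a real gap. You invoke hypothesis~A and Proposition~\ref{loc fin} to force local stabilization of the nested $\Sigma_n$, but hypothesis~A bounds only the weight of \emph{canonical} tori, and there is no reason the successive PL least area boundary tori appearing in your process should be canonical; ``PL least area'' by itself gives no uniform weight bound (it only says the torus is minimal in its own homotopy class). So the local-finiteness argument as written does not go through, and you have not actually shown the union is a \svar, let alone taut. You also should not need hypothesis~A here at all: this proposition is a general fact about \irr\ open $3$-manifolds, used in the paper before hypotheses A--D come into play.

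The paper's proof avoids iteration entirely. After producing the large Seifert \svar\ $\Sigma_2$ from Lemma~\ref{create Seifert}, it first adds product regions between any pair of boundary tori of $\Sigma_2$ that are parallel outside $\Sigma_2$, obtaining $\Sigma_3$, and then applies Lemma~\ref{region X} once to each boundary torus of $\Sigma_3$. The resulting $\Sigma$ is already taut, by the following direct argument that you are missing: if $T_1$ is any PL least area torus with a homotopic copy $T_1'\subset\Sigma$, then by Proposition~\ref{pl stuff}(iii) $T_1$ is disjoint from every component of $\bord\Sigma$ (they are PL least area and homotopically disjoint from $T_1$); hence if $T_1\not\subset\Sigma$ then $T_1\cap\Sigma=\emptyset$, so $T_1$ and $T_1'$ are disjoint and therefore parallel, and the product region between them must contain some component $T_2$ of $\bord\Sigma$; thus $T_1$ is isotopic to $T_2$, and Lemma~\ref{region X} already guaranteed all such tori lie in $\Sigma$. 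This single observation replaces your entire stabilization machinery. Part~(ii) then proceeds by a short case analysis (\,$T\subset\Sigma$; $T$ meets $\bord\Sigma$ essentially; $T\cap\Sigma=\emptyset$\,), each case reducing to the same one-pass enlargement as in~(i).
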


\begin{proof}
(i) By~Proposition~\ref{pl stuff}(iii), one  can perturb $T$ and $T'$ to intersect minimally.
Then one takes a regular
neighborhood $\Sigma_1$ of $T\cup T'$ and applies Lemma~\ref{create Seifert}
to find a large, \incomp\ Seifert \svar\ $\Sigma_2\subset M$
containing $\Sigma_1$. Then we can enlarge $\Sigma_2$ as follows:

For every pair of tori $T_1,T_2$ in $\bord\Sigma_2$ that are parallel outside $\Sigma_2$, add
a product region, obtaining a graph \svar\ $\Sigma_3$. Then for every boundary torus $T\subset\bord\Sigma_3$, add a region $X$ as in Lemma~\ref{region X} (in Case (ii) take $X=\emptyset$).
We claim that the resulting \svar\ $\Sigma$ is a taut graph \var. The only part of the definition that
is not obvious is that any PL least area \incomp\ torus which can be homotoped into
$\Sigma$ lies in $\Sigma$. Let $T_1$ be such a torus, and let $T'_1\subset\Sigma$ be a
torus homotopic to $T_1$. Since every component of $\bord\Sigma$ is PL least area,
$T_1$ is disjoint from them by~\ref{pl stuff}(iii). Hence if $T_1\not\subset\Sigma$, then $T_1\cap\Sigma=\emptyset$. This implies that $T_1,T'_1$ are disjoint, hence parallel. A product region between them must
contain some component $T_2$ of $\bord \Sigma$. Thus $T_1$ is homotopic to $T_2$, and
must be contained in $\Sigma$.

(ii) We distinguish three cases.

 If $T\subset\Sigma$ we can simply put $\Sigma':=\Sigma$.
 
 If $T$ intersects essentially some
component $T'$ of $\bord\Sigma$, then after perturbation of $T$, $T\cap T'$ is a union of special curves. Let $\Sigma_1$ be a regular neighborhood of $\Sigma\cup T$. Then $\Sigma_1$ is Seifert fibered. The
rest of the proof is as in (i).

If $T\cap\Sigma=\emptyset$, let $T'$ be a least PL area \incomp\ torus intersecting $T$ essentially.
If $T'\cap\Sigma\neq\emptyset$, then using the previous case we can find a taut graph \svar\ $\Sigma''$
containing $\Sigma\cup T'$, and then we are reduced to the first two cases. Otherwise
we let $\Sigma_1$
be the union of $\Sigma$ and a regular neighborhood of $T\cup T'$, and argue as in (i).
\end{proof}

\section{Proof of Theorem~\ref{th:graph}}\label{sec:proof}
\subsection{An increasing sequence of taut graph submanifolds}

\begin{prop}\label{prop:seq}.
There exists a (finite or infinite) sequence $\Sigma_0\subset\Sigma_1\subset \Sigma_2\subset \cdots \Sigma_n \cdots$ of taut graph \svar s of $M$ such that any noncanonical, least PL area \incomp\ torus
is contained in some $\Sigma_n$. Furthermore, each $\Sigma_n$ is the union of
a compact \svar\ with a finite (possibly empty) collection of \svar s homeomorphic to $T^2\times
[0,\infty)$; in particular, $\bord\Sigma_n$ has finitely many connected components.
\end{prop}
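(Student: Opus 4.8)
The plan is to enumerate the isotopy classes of noncanonical least PL area \incomp\ tori, engulf them one at a time while staying inside the class of taut graph \svar s by means of Proposition~\ref{tautening}, and then to verify by induction that the submanifolds so produced keep the required global shape.

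First I would note that, since $\calT$ is countable, there are only countably many normal surfaces in $M$ — each is specified by finitely much combinatorial data — hence only countably many isotopy classes of least PL area \incomp\ tori. Choose a least PL area representative $T_n$ in each isotopy class that is noncanonical (if there are none, or only finitely many, the sequence below is finite, which is permitted). Set $\Sigma_0:=\emptyset$; this is vacuously a taut graph \svar, and it is the union of the (empty) compact \svar\ with no copies of $T^2\times[0,\infty)$. Given a taut graph \svar\ $\Sigma_{n-1}$: if $T_n\subset\Sigma_{n-1}$, put $\Sigma_n:=\Sigma_{n-1}$; otherwise let $\Sigma_n$ be a taut graph \svar\ containing $\Sigma_{n-1}$ and $T_n$, as furnished by Proposition~\ref{tautening}(ii). (When $\Sigma_{n-1}=\emptyset$ this is just Proposition~\ref{tautening}(i) applied to $T_n$ together with a least PL area representative of an \incomp\ torus not homotopically disjoint from $T_n$: such a torus exists because $T_n$ is noncanonical, and it is itself noncanonical — hence not a double cover of a Klein bottle, hence possesses a least PL area representative by Proposition~\ref{pl stuff}(ii) and the Remark following it.) Then $T_n\subset\Sigma_n\subset\Sigma_{n+1}\subset\cdots$. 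Finally, if $S$ is any noncanonical least PL area \incomp\ torus, it is isotopic to some $T_n$; since $T_n\subset\Sigma_n$ and $\Sigma_n$ is taut, the PL least area torus $S$, being homotopic into $\Sigma_n$, lies in $\Sigma_n$. This yields the first assertion.

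It remains to show, by induction on $n$, that $\Sigma_n$ is the union of a compact \svar\ with finitely many \svar s homeomorphic to $T^2\times[0,\infty)$, each glued along its boundary torus; the claim about $\bord\Sigma_n$ then follows, since such half-open pieces contribute nothing to the boundary. The case $n=0$ is trivial. For the inductive step I would inspect the construction in the proof of Proposition~\ref{tautening}: it passes from $\Sigma_{n-1}$ to $\Sigma_{n-1}\cup T_n$ (or to a regular neighbourhood of $T_n\cup T'$), then to a large Seifert \svar\ supplied by Lemma~\ref{create Seifert} — which is compact, being a regular neighbourhood of a compact set with finitely many solid tori attached — then adjoins finitely many compact product regions $T^2\times I$, and finally, for each of the finitely many boundary tori of the resulting intermediate graph \svar, adjoins a region $X$ as in Lemma~\ref{region X}. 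Each such $X$ is empty, or a compact $T^2\times I$, or a compact $K^2\tilde\times I$, or a single copy of $T^2\times[0,\infty)$; only the last contributes a new end. Since the class of \svar s of the stated form is closed under adjoining a compact \svar\ (a compact \svar\ meets each $T^2\times[0,\infty)$ piece in a compact set, so a collar gets absorbed into the new compact core) and under adjoining one $T^2\times[0,\infty)$ along its boundary torus, and $\Sigma_n$ arises from $\Sigma_{n-1}$ by finitely many such moves, the inductive step follows; the finitely many ends of $\Sigma_{n-1}$ persist because their ideal ends are tame ends of $M$, to which nothing is glued.

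The step needing the most care — and the main obstacle — is this last one: I must confirm that the regular neighbourhoods, the large Seifert piece, and the regions $X$ can be chosen small enough that the pieces adjoined at stage $n$ are disjoint from one another and from the ends of $\Sigma_{n-1}$, so that the result genuinely decomposes as a compact core together with finitely many \emph{disjoint} copies of $T^2\times[0,\infty)$, no two of which run out the same end of $M$. This is the same `smallness' bookkeeping that Proposition~\ref{tautening} already carries out internally, relying on Proposition~\ref{loc fin} and Lemma~\ref{myineq} to control the diameters of least PL area tori of bounded weight; so the work here is to see that those choices are compatible with the inductive finiteness of $\bord\Sigma_{n-1}$, not to prove anything essentially new. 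Granting this, everything else — that each $\Sigma_n$ is a taut graph \svar\ and that $\bord\Sigma_n$ has finitely many components — is immediate.
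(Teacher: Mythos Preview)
Your argument is correct and follows the same route as the paper's: enumerate the homotopy classes of noncanonical incompressible tori and apply Proposition~\ref{tautening} inductively, starting from $\Sigma_0=\emptyset$. You are in fact more careful than the paper on two points it leaves implicit---the appeal to tautness needed to conclude that \emph{every} least PL area representative of a given class (not just the chosen $T_n$) lies in $\Sigma_n$, and the verification of the ``Furthermore'' clause by tracing through the construction in Proposition~\ref{tautening} and Lemma~\ref{region X}; your final paragraph's disjointness bookkeeping is indeed routine and not a genuine obstacle.
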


\begin{proof}
Let $\mathcal L = [T_1], [T_2], \ldots$ be a list of  all homotopy classes of noncanonical \incomp\ tori.
The construction is by induction using Proposition~\ref{tautening}.

Set $\Sigma_0:=\emptyset$. If the list $\mathcal L$ is nonempty, take a least PL area
representative of the first class $[T_1]$.
Since it is noncanonical, another \incomp\ torus intersects $T_1$ essentially. Proposition~\ref{tautening}(i) gives a \svar\ $\Sigma_1$. Then, assuming we have constructed $\Sigma_1,\ldots,\Sigma_n$, we take a least PL area representative of $[T_{n+1}]$ and apply Proposition~\ref{tautening}(ii). 
\end{proof}

From now on we fix a sequence $\Sigma_n$ satisfying the conclusion of
Proposition~\ref{prop:seq}. If $\Sigma_n$ is finite or eventually constant, then
Theorem~\ref{th:graph} follows immediately by taking $\Sigma:=\bigcup \Sigma_n$.
Henceforth we assume (by going to a subsequence) that $\Sigma_n$ is strictly increasing.

Let $\sigma$ be a $3$-simplex of $\calT$. Then $\sigma\cap\bigcup_n\bord\Sigma_n$  is a disjoint union of normal disks. We say that two such
disks $D,D'$ are \bydef{equivalent} if they are normally homotopic and there exists
$n$ and  a component $X$ of $\Sigma_n\cap\sigma$ that contains both $D$ and $D'$.
This relation is obviously reflexive and symmetric. Since the sequence $\Sigma_n$
is increasing, it is also transitive. Hence it is an equivalence relation.

\begin{lem}\label{finite canonical}
For each $3$-simplex $\sigma$, only finitely many components of $\bigcup_n\bord\Sigma_n$ that
intersect $\sigma$ can belong to canonical tori.
\end{lem}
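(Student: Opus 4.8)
The statement to prove is Lemma~\ref{finite canonical}: for each fixed $3$-simplex $\sigma$, only finitely many components of $\bigcup_n\bord\Sigma_n$ meeting $\sigma$ lie on canonical tori. The plan is to reduce this to the local finiteness established in Proposition~\ref{loc fin}. The key point is that a component of $\bigcup_n\bord\Sigma_n$ lying on a canonical torus is itself part of a PL least area torus: by the construction in Proposition~\ref{tautening}, the new boundary tori introduced at each stage either come from the large Seifert \svar\ $\Sigma_2$ of Lemma~\ref{create Seifert} or are the least PL area tori $T'$ supplied by Lemma~\ref{region X}; in all cases the boundary tori are PL least area normal tori. So the components in question all have weight bounded by the constant $C_1$ of hypothesis~A.

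First I would make precise the claim that each $\bord\Sigma_n$ is a union of least PL area normal tori, by inspecting the inductive construction in Propositions~\ref{tautening} and~\ref{prop:seq}: every time we enlarge, we add product regions (which do not create new boundary components not already parallel to old ones), regions $X\cong T^2\times[0,\infty)$ or $K^2\tilde\times I$ as in Lemma~\ref{region X} (the new outer torus being PL least area, or no new torus at all), and the outer boundary tori of the large Seifert piece $\Sigma_2$ from Lemma~\ref{create Seifert}, which are \incomp\ hence may be taken PL least area. Thus every boundary torus of every $\Sigma_n$ is a PL least area \incomp\ torus, so in particular those lying on canonical tori are PL least area representatives of canonical isotopy classes and have weight at most $C_1$ by hypothesis~A.

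Next, I would invoke Proposition~\ref{loc fin}: the collection $\calC$ of all PL least area canonical tori appearing as components of some $\bord\Sigma_n$ has uniformly bounded weight, and by Proposition~\ref{pl stuff}(iii) distinct such tori are disjoint (or equal, so we may treat the collection up to isotopy). Hence $\calC$ is locally finite. Since $\sigma$ is compact, only finitely many members of $\calC$ meet $\sigma$, which is the desired conclusion.

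The main obstacle is the bookkeeping in the first paragraph: one must check carefully that the enlargement procedures in Proposition~\ref{tautening} genuinely produce only PL least area boundary tori (in particular, that the outer boundary of the large Seifert \svar\ from Lemma~\ref{create Seifert} can always be taken least PL area, using Proposition~\ref{pl stuff}(ii), and that no canonical boundary torus sneaks in with unbounded weight through repeated product-region additions). Once this normalization is in hand, the application of Proposition~\ref{loc fin} is routine. A minor subtlety is that a boundary torus on a canonical class could a priori be double-covered by a Klein bottle rather than least PL area itself; but as noted in the Remark after Proposition~\ref{pl stuff}, such a torus bounds a $K^2\tilde\times I$, and the relevant weight bound from hypothesis~A still applies to a least PL area representative of its class, so local finiteness is unaffected.
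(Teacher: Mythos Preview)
Your proposal is correct and follows the same route as the paper, which simply cites Hypothesis~A and Proposition~\ref{loc fin}. Note, however, that the bookkeeping you worry about in your first paragraph is unnecessary: you do not need to inspect the construction in Proposition~\ref{tautening} to verify that boundary tori of $\Sigma_n$ are PL least area, since this is already part of the definition of a \emph{taut} graph submanifold, and each $\Sigma_n$ is taut by Proposition~\ref{prop:seq}.
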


\begin{proof}
This follows immediately from Hypothesis A and Proposition~\ref{loc fin}.
\end{proof}

\begin{lem}\label{same torus}
For each $3$-simplex $\sigma$, only finitely many components of $\bigcup_n\bord\Sigma_n$ have disconnected intersection with $\sigma$.
\end{lem}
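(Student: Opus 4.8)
By Lemma~\ref{finite canonical} it suffices to bound the \emph{noncanonical} components, so suppose for contradiction that there are infinitely many of them, $T_1,T_2,\ldots$, each a boundary torus of a taut graph \svar\ $\Sigma_{m_k}$. Being boundary tori of taut graph \svar s, the $T_k$ are PL least area \incomp\ tori; by Proposition~\ref{pl stuff}(iii) any two least PL area tori are equal or disjoint or intersect essentially, and the last cannot happen here (for $m_k\le m_l$ the torus $T_k$ lies in $\Sigma_{m_l}\supseteq\Sigma_{m_k}$, and an \incomp\ torus contained in a graph \svar\ can be isotoped off its boundary), so we may take the $T_k$ pairwise disjoint. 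Each $T_k$ is a least weight normal noncanonical \incomp\ torus with $T_k\cap\sigma$ disconnected, so Hypothesis~C applies: there is a normal torus $T'_k$ with $\wt(T'_k)\le C_3$, not homotopically disjoint from $T_k$, such that $T_k\cap T'_k$ contains a point $p_k$ with $d_\calT(p_k,\sigma)\le C_4$. By Lemma~\ref{myineq}, $\diam(T'_k)\le C_3^2$, so $T'_k$ — hence the point $p_k\in T_k$ — lies in a fixed finite subcomplex $K'$ containing the $(C_3^2+C_4)$-neighborhood of $\sigma$ (finite since $\calT$ has bounded geometry); in particular every $T_k$ meets the fixed compact set $R$, the $C_4$-neighborhood of $\sigma$. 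Since $K'$ contains only finitely many normal tori of weight $\le C_3$ up to normal homotopy, after passing to a subsequence all $T'_k$ are homotopic to a single torus. That homotopy class is nontrivial, \incomp, not that of a double cover of a Klein bottle, and noncanonical (otherwise $T'_k$ would be homotopically disjoint from the noncanonical torus $T_1$), so by Proposition~\ref{pl stuff}(ii) we may fix a noncanonical \emph{least PL area} torus $\tau$ which is not homotopically disjoint from every $T_k$.

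Since $\tau$ and $T_k$ are least PL area and not homotopically disjoint, Proposition~\ref{pl stuff}(iii) shows they coincide (discard that one $k$) or meet minimally after a small perturbation, in which case $\tau\cap T_k\neq\emptyset$ and consists of curves essential on both. As the $T_k$ are pairwise disjoint, all these curves on $\tau$ are disjoint essential simple closed curves, hence freely isotopic; by Proposition~\ref{special} their common slope is $\xi(\tau)$, and by Lemma~\ref{create Seifert} a regular neighborhood of $\tau\cup T_k$ lies in a large Seifert \svar\ $\Sigma^{(k)}$ in which $\xi(\tau)$ is a regular fiber, so that $T_k$ is isotopic into $\Sigma^{(k)}$ and all the $\Sigma^{(k)}$ share a neighborhood of $\tau$ with matching fibrations. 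Next I claim that each isotopy class contains only finitely many of the $T_k$: if $S_1,\ldots,S_l,\ldots$ are pairwise parallel least PL area tori with $S_j$ a boundary torus of the taut graph \svar\ $\Sigma_{\mu_j}$, then each $S_j$, being parallel to $S_i\subseteq\partial\Sigma_{\mu_i}$, is homotopic into $\Sigma_{\mu_i}$ and hence (tautness) contained in it; taking $\mu_i$ minimal, all $S_j$ lie in $\Sigma_{\mu_i}$, are linearly ordered in the product region they fill, and at most one is a boundary component of $\Sigma_{\mu_i}$, while any other lies in $\Int\Sigma_{\mu_i}\subseteq\Int\Sigma_{\mu_j}$, contradicting $S_j\subseteq\partial\Sigma_{\mu_j}$. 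So, discarding and reindexing, we may assume the $T_k$ are pairwise non-isotopic.

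The heart of the matter is to show that infinitely many of the $T_k$ lie in a single $\Sigma_n$. Each $T_k$ is noncanonical and least PL area, so $T_k\subseteq\Sigma_{\nu(k)}$ for some $\nu(k)$ by Proposition~\ref{prop:seq}; the task is to see that $\nu(k)$ is bounded along a subsequence. The class $[\tau]$ occurs in the enumeration used in the proof of Proposition~\ref{prop:seq}, say at stage $n_0$, so $\tau\subseteq\Sigma_{n_0}$. I would then argue that for all $n\ge n_0$ the taut graph \svar\ $\Sigma_n$ already contains every least PL area torus meeting $\tau$ essentially: when the class of such a torus $T$ is processed at a stage $s\ge n_0$, Proposition~\ref{tautening}(ii) is applied to $\Sigma_{s-1}\supseteq\Sigma_{n_0}\ni\tau$; since $T$ meets $\tau\subseteq\Sigma_{s-1}$ essentially it is not disjoint from $\Sigma_{s-1}$, so the tautening construction — which via Lemmas~\ref{create Seifert} and~\ref{region X} enlarges the Seifert piece of $\Sigma_{s-1}$ carrying $\tau$ so as to engulf $T$ — puts $T\subseteq\Sigma_s$; and this Seifert piece stabilizes, which is exactly where one uses that every $\Sigma_n$ is a compact \svar\ with finitely many ends homeomorphic to $T^2\times[0,\infty)$ (so $\bord\Sigma_n$ has finitely many components), to prevent it from absorbing arbitrarily late-born tori. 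Granting this, all the $T_k$, which meet $\tau$ essentially, lie in $\Sigma_{n_1}$ for one fixed $n_1$.

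Finally, a fixed taut graph \svar\ $\Sigma_{n_1}$ — the union of a compact \svar\ with finitely many copies of $T^2\times[0,\infty)$, split by finitely many canonical tori into Seifert pieces — contains only finitely many pairwise disjoint, pairwise non-isotopic \incomp\ tori: by Haken's finiteness theorem applied to a compact core, each compact Seifert piece and each standard end contributes only finitely many isotopy classes. This contradicts the infinitude of the $T_k$ and proves the lemma. I expect the step of the previous paragraph — forcing the (a priori unbounded-weight, accumulating) $T_k$ into a single $\Sigma_n$ by controlling the engulfing procedure of Proposition~\ref{tautening} around the fixed small torus $\tau$ — to be the main obstacle, and it is there, together with Hypothesis~C, that the structural hypothesis on the $\Sigma_n$ does the essential work; the remaining steps are routine given Sections~\ref{sec:pl} and~\ref{sec:seifert} and Proposition~\ref{tautening}.
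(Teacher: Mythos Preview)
Your argument overshoots considerably and leaves a real gap at the step you yourself flag. The paragraph where you try to force all the $T_k$ into a single $\Sigma_{n_1}$ via ``stabilization of the Seifert piece carrying $\tau$'' does not go through: the stage $s$ at which the class of $T_k$ is processed depends on $k$, so your argument only gives $T_k\subset\Sigma_{s(k)}$, not $T_k\subset\Sigma_{n_1}$ for a fixed $n_1$; and nothing prevents the Seifert piece of $\tau$ from growing at infinitely many stages. The appeal to ``finitely many ends'' of $\Sigma_n$ does not control this. The subsequent Haken-finiteness step, the reduction to pairwise non-isotopic $T_k$, and the analysis via Lemma~\ref{create Seifert} are all unnecessary scaffolding around this gap.

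The paper's proof is much shorter and bypasses all of this. The one preliminary move you omit is decisive: since each $\Sigma_n$ has only finitely many boundary components, an infinite collection of distinct boundary tori $T_k$ forces (after passing to a subsequence) $T_k\subset\bord\Sigma_{n(k)}$ with $n(k)\to\infty$. One then applies Hypothesis~C exactly as you do, obtains by pigeonhole a single torus $T'_{k_0}$ (up to normal homotopy) intersecting infinitely many $T_k$ essentially, observes it is noncanonical, and takes a least PL area representative lying in some $\Sigma_{n_0}$ by Proposition~\ref{prop:seq}. The contradiction is then immediate: choose $k$ with $n(k)>n_0$; then this representative lies in $\Sigma_{n_0}\subset\Sigma_{n(k)}$, while $T_k\subset\bord\Sigma_{n(k)}$, and both are least PL area and not homotopically disjoint, so by Proposition~\ref{pl stuff}(iii) they must actually meet transversally in essential curves --- impossible for a torus contained in $\Sigma_{n(k)}$ against a component of $\bord\Sigma_{n(k)}$. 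No engulfing, no Haken finiteness, no tracking of isotopy classes among the $T_k$.
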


\begin{proof}
Assume the contrary. Let $T_1,\ldots,T_k,\ldots$ be an infinite sequence of tori of $\bigcup_n\bord\Sigma_n$ having disconnected intersection with $\sigma$. Since each $\Sigma_n$ has only finitely many boundary components, we may assume by taking a subsequence that there
is a sequence $n(k)$ going to infinity with $k$, such that for each $k$, $T_k\subset\bord \Sigma_{n(k)}$
but $T_k\not\subset\bord\Sigma_{n(k)-1}$.

Apply hypothesis~C to each $T_k$, getting a torus $T'_k$. Then the $T'_k$'s have uniformly bounded diameter, and their distances from $\sigma$ are uniformly bounded. Hence there are only finitely many of them up to normal homotopy.

It follows that some $T'_{k_0}$ intersects essentially infinitely many of the $T_k$'s. Then
$T'_{k_0}$  is noncanonical, hence contained in some $\Sigma_{n_0}$. This is a contradiction.
\end{proof}

\begin{prop}\label{finite class}
For each $3$-simplex $\sigma$, there are only finitely many equivalence classes of disks
in $\sigma\cap\bigcup_n\bord\Sigma_n$. 
\end{prop}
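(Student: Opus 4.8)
The plan is to classify the equivalence classes of disks in $\sigma\cap\bigcup_n\bord\Sigma_n$ into three types and bound the number of each. First, by Lemma~\ref{finite canonical} only finitely many components of $\bigcup_n\bord\Sigma_n$ meeting $\sigma$ lie on canonical tori, so at most finitely many classes contain a disk lying on a canonical boundary torus. Second, by Lemma~\ref{same torus} only finitely many components have \emph{disconnected} intersection with $\sigma$; so at most finitely many classes contain a disk lying on such a torus. Hence it remains to bound the number of equivalence classes of disks $D$ that lie on a noncanonical torus $T\subset\bord\Sigma_{n}$ meeting $\sigma$ in the single disk $D$. Since each $\Sigma_n$ has finitely many boundary components and the sequence is strictly increasing, all but finitely many such tori $T$ satisfy: $T$ first appears as a boundary component at some stage, i.e. $T\subset\bord\Sigma_{n}\setminus\bord\Sigma_{n-1}$.

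The heart of the argument is to show that two noncanonical least PL area tori $T,T'$, each meeting $\sigma$ in a single disk $D$, $D'$ with $D$ normally homotopic to $D'$, must give \emph{equivalent} disks — i.e. some $\Sigma_m$ has a component containing both $D$ and $D'$. Here is where Hypothesis~D enters. If $T$ and $T'$ intersect essentially, then by Proposition~\ref{tautening}(i) they both lie in a common taut graph submanifold, which is contained in some $\Sigma_m$ (using Proposition~\ref{prop:seq}, after possibly noting the tautening submanifold's tori are among those engulfed); then the component of $\Sigma_m\cap\sigma$ containing $D$ also contains $D'$ since $D,D'$ are normally homotopic disks inside the same Seifert (or graph) piece meeting $\sigma$. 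If instead $T$ and $T'$ are disjoint, Hypothesis~D produces an annulus $A$ joining a special curve on $T$ to a special curve on $T'$, with interior missing $T\cup T'$, containing an essential arc $\alpha\subset\sigma$ from $D$ to $D'$. Thickening $T\cup A\cup T'$ gives a Seifert submanifold in which special curves on $T$ and on $T'$ are both isotopic to the fiber; running the enlargement/tautening machinery (Lemma~\ref{create Seifert}, Proposition~\ref{tautening}) one obtains a taut graph submanifold containing both $T$ and $T'$, hence contained in some $\Sigma_m$; the arc $\alpha\subset\sigma$ witnesses that $D$ and $D'$ lie in the same component of $\Sigma_m\cap\sigma$, so $D\sim D'$.

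Granting this, the noncanonical single-disk classes are controlled as follows: a disk $D$ in $\sigma$ has, up to normal homotopy, only finitely many types (a normal disk in a 3-simplex is determined by which edges it crosses, so at most $2^{6}$ or so normal isotopy classes in $\sigma$). By the previous paragraph, all single-disk noncanonical tori meeting $\sigma$ in a disk of a \emph{fixed} normal type yield disks that are pairwise equivalent — provided any two of them are connected by a chain of tori each pair of which either intersects essentially or is handled by Hypothesis~D. One must argue such a chain always exists: take two such tori $T,T'$; they are either disjoint or intersect essentially, and in the disjoint case Hypothesis~D applies directly to $\sigma$ since both meet $\sigma$ in a single disk. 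So in fact a single application suffices, and all single-disk noncanonical tori of a given normal type in $\sigma$ fall into one equivalence class. Combined with the finitely many normal types, the finitely many canonical-torus classes, and the finitely many disconnected-intersection classes, this gives finitely many equivalence classes in total.

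The main obstacle I anticipate is the bookkeeping in the disjoint case: verifying that the taut graph submanifold produced from Hypothesis~D's annulus $A$ is genuinely among the $\Sigma_m$ of our fixed sequence — the sequence was built by engulfing representatives of each homotopy class of noncanonical tori one at a time, so one needs that the new Seifert piece's incompressible tori are themselves homotopy classes appearing later in the list $\mathcal L$, and that Proposition~\ref{tautening}(ii) then absorbs the whole piece. A secondary subtlety is ensuring the essential arc $\alpha\subset\sigma$ actually certifies that $D$ and $D'$ are in the \emph{same} component of $\Sigma_m\cap\sigma$ rather than merely in the same component of $\Sigma_m$; this should follow because $\alpha$, lying in $A\subset\Sigma_m$ and in $\sigma$, is a path in $\Sigma_m\cap\sigma$ from $D$ to $D'$.
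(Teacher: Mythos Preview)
Your overall reduction (discard canonical tori via Lemma~\ref{finite canonical}, discard tori with disconnected intersection via Lemma~\ref{same torus}, reduce to noncanonical boundary tori meeting $\sigma$ in a single disk of a fixed normal type) matches the paper, and your observation that any two such tori are automatically disjoint is correct: components of $\bigcup_n\bord\Sigma_n$ are pairwise disjoint because $\Sigma_n\subset\Sigma_{n+1}$ and the boundaries are PL least area. So only your ``disjoint'' case matters.

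The gap is precisely there. You take the annulus $A$ from Hypothesis~D and claim that after tautening one gets some $\Sigma_m$ with $A\subset\Sigma_m$, so that the arc $\alpha\subset A\cap\sigma$ gives a path in $\Sigma_m\cap\sigma$ from $D$ to $D'$. But nothing in Hypothesis~D or in the tautening procedure puts $A$ itself inside any $\Sigma_m$. The arc $\alpha$ lies in the product region $X'\subset\sigma$ between $D$ and $D'$, and that region is exactly the part that may remain \emph{outside} every $\Sigma_m$; if it did not, $D$ and $D'$ would already be equivalent. Your acknowledged ``secondary subtlety'' is therefore fatal, not minor: the assertion ``$\alpha$, lying in $A\subset\Sigma_m$'' is unjustified. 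The related bookkeeping worry is also real: a taut graph submanifold you build by hand need not be contained in any member of the fixed sequence $(\Sigma_n)$; only least PL area \emph{noncanonical tori} are guaranteed to be absorbed (Proposition~\ref{prop:seq}).

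The paper's argument uses $\alpha$ quite differently. It first shows (Lemma~\ref{same seifert}, via one application of Hypothesis~D) that $T_1,T_2$ lie in a common connected Seifert piece $\Sigma'_n\subset\Sigma_n$. If the product region $X$ between $D_1,D_2$ is not in $\Sigma'_n$, one finds boundary tori $T'_1,T'_2$ of $\Sigma'_n$ with disks $D'_1,D'_2\subset X$ and $X'\cap\Sigma'_n=\emptyset$. A \emph{second} application of Hypothesis~D to $T'_1,T'_2$ produces $A$ and $\alpha\subset X'$; one then closes $\alpha$ up with an arc $\alpha'\subset\Sigma'_n$ and glues $A$ to a vertical annulus $A'\subset\Sigma'_n$ to obtain an embedded incompressible torus $T=A\cup A'$ carrying the curve $\alpha\cup\alpha'$, which has algebraic intersection $1$ with $T'_1$ and $T'_2$. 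This $T$ is noncanonical, so its least PL area representative lies in some $\Sigma_{n'}$. Now either $X'\subset\Sigma'_{n'}$ (forcing $D'_1\sim D'_2$, a contradiction), or some boundary torus $T''$ of $\Sigma'_{n'}$ still separates $D'_1$ from $D'_2$ inside $X'$; but then $\alpha\cup\alpha'$ has odd intersection with $T''$ while $T'\subset\Sigma'_{n'}$, contradicting $T''\subset\bord\Sigma'_{n'}$. The point is that $\alpha$ is used to detect essential intersection with any torus that might obstruct equivalence, not as a path inside some $\Sigma_m$.
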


\begin{proof}
The proof is by contradiction. Suppose that there is an infinite collection $D_k$ of normally homotopic
disks in $\sigma\cap\bigcup_n\bord\Sigma_n$ that are not pairwise equivalent.
By Lemma~\ref{same torus}, one can assume without loss of generality that all those disks,
as well as all disks that are equivalent to them, belong to different tori of $\bigcup_n\bord\Sigma_n$.
Moreover, we may assume that all those tori have connected intersection with $\sigma$,
so that hypothesis~D can be applied to them. We also
assume using Lemma~\ref{finite canonical} that all those tori are noncanonical.

Let $T_1,T_2$ be such that $D_1=T_1\cap \bigcup_n\bord\Sigma_n$ and
$D_2=T_2\cap \bigcup_n\bord\Sigma_n$.

\begin{lem}\label{same seifert}
There exists $n$ and a connected Seifert \svar\ $\Sigma'_n$ of $\Sigma_n$ such that
$T_1\subset \Sigma'_n$ and $T_2\subset\Sigma'_n$.
\end{lem}

\begin{proof}
By hypothesis, $T_1$ and $T_2$ are PL least area and noncanonical, so there exists $n_1,n_2$
such that $T_i\subset \Sigma_{n_i}$ for $i=1,2$. Setting $n:=\max{(n_1,n_2)}$ and using
the fact that the sequence $\Sigma_n$ is increasing, one has $T_i\subset \Sigma_n$.
Let $\Sigma'_1,\Sigma'_2$ be the maximal Seifert components of the graph \var\ $\Sigma_n$
containing $T_1,T_2$ respectively. If $\Sigma'_1=\Sigma'_2$, then the lemma is
proved. Otherwise $\Sigma'_1\cap \Sigma'_2$ is the empty set.

By hypothesis~D, there is an annulus $A$ connecting a special
curve on $T_1$ to a special curve on $T_2$. By a homotopy, we can ensure that $A$ is a union
$A_1\cup_{c_1} A_2\cup_{c_2} \cdots \cup_{c_{k-1}} A_k$, where each $c_j$ is an essential
simple closed curve on $A$, and the $A_j$'s are either contained in $\Sigma'_1\cup
\Sigma'_2$ or have interior disjoint from that set. Furthermore, we may assume that each
$A_j$ that is contained in  $\Sigma'_1\cup \Sigma'_2$ is vertical.

Choose $j$ such that $A_j$ has interior disjoint from $\Sigma'_1\cup \Sigma'_2$ and
connects some component $T'_1$ of $\bord\Sigma'_1$ to some component $T'_2$ of
$\bord\Sigma'_2$. Since the Seifert \var\ $\Sigma'_1$ is large, its base orbifold contains
an essential properly embedded arc $\alpha_1$ connecting the projection of
$T'_1$ to itself. Hence $\Sigma'_1$ contains an essential annulus $A'_1$ connecting
$T'_1$ to itself. Likewise, $\Sigma'_2$ contains an essential annulus $A'_2$ connecting
$T'_2$ to itself. By patching together $A'_1$, $A'_2$, and two parallel copies of $A_j$,
we get a torus $T$ which is vertical in some Seifert \var\ $\Sigma$ containing
both $\Sigma'_1$ and $\Sigma'_2$. By construction, $T$
is \incomp\ and is not homotopic into either $\Sigma'_1$ or
$\Sigma'_2$. Hence it is noncanonical. Still denote by $T$ a least PL area representative
of the homotopy class of $T$. Then there exists $n'>n$ such that $T\subset\Sigma_{n'}$.

It follows that some connected Seifert \svar\ of $\Sigma_{n'}$ contains $\Sigma'_1\cup \Sigma'_2$,
and therefore the original tori $T_1$ and $T_2$ as well.
\end{proof}

Having proved Lemma~\ref{same seifert}, we continue the proof of Proposition~\ref{finite class}.
Let $X\subset\sigma$ be the product region between $D_1$ and $D_2$. If $X\subset\Sigma'$,
then $D_1$ and $D_2$ are equivalent, contradicting our hypothesis. Thus $\bord\Sigma'_n$
intersects $X$ in at least two nonequivalent normal disks $D'_1,D'_2$ 
Furthermore, the components
$T'_1,T'_2$ of $\bord\Sigma'_n$ containing $D'_1,D'_2$ respectively are different.
Let $X'$ denote the product region between $D'_1$ and $D'_2$. Without
loss of generality we assume that $X'\cap \Sigma'_n=\emptyset$.

By hypothesis~D, there is an annulus $A$ connecting a special
curve of $T'_1$ to a special curve of $T'_2$
and containing an arc $\alpha\subset X'$, essential on $A$, and connecting $D'_1$ to
$D'_2$. Since $\Sigma'$ is connected, there is an arc $\alpha'\subset\Sigma'$ connecting
the endpoints of $\alpha$. Thus $\alpha\cup\alpha'$ is a simple closed curve intersecting
each of $T'_1$ and $T'_2$ in a single point. Since $\Sigma'_n$ is Seifert fibered and
$\bord A$ consists of special curves, there is a properly embedded annulus $A'\subset \Sigma'$
that connects both components of $\bord A$, and we may assume that $\alpha'\subset
\Sigma'$.

Let $T$ be the union of $A$ and $A'$. Then $T$ is an embedded, \incomp\ torus, that intersects
$T'_1$ and $T'_2$ essentially. In particular, it is noncanonical. Hence there exists $n'>n$,
a Seifert \svar\ $\Sigma'_{n'}\subset\Sigma_{n'}$ and a torus $T'$ homotopic to $T$
such that $\Sigma'_n\subset\Sigma'_{n'}$ and $T'\subset \Sigma'_{n'}$.

If $\Sigma'_{n'}$ contains $X'$, then $D'_1$ and $D'_2$ are equivalent, contradicting
an assumption made earlier. Hence there is a torus $T''\subset \bord\Sigma'_{n'}$ such
that $T''\cap X'$ is a disk $D''$. Now $\alpha$ intersects $D''$ in an odd number of points,
and $\alpha'\cap T''=\emptyset$ because $\alpha'\subset\Sigma'_n\subset\Sigma'_{n'}$.
Hence $\alpha\cup\alpha'$ is not homotopically disjoint from $T''$. Since $T'$ is
homotopic to $T$, it contains a curve homotopic to $\alpha\cup\alpha'$. This implies
that $T'$ and $T''$ intersect essentially, which contradicts the fact that $T''$ is a boundary
component of some \svar\ containing $T'$.
\end{proof}

Modify $\Sigma_n$ in the following way: for
each edge $e$ of $\cal T$ we look at the set $S_e:= e\cap \bigcup_n\bord\Sigma$. We can
define an equivalence relation on $S_e$ by saying that two points $x,y$ are equivalent if
for some $3$-simplex $\sigma$ containing $e$ there are equivalent disks $D,D'$ in $\sigma\cap\bigcup_n\bord\Sigma_n$ such that $D\cap e=x$ and $D'\cap e=x'$. By
Proposition~\ref{finite class}, this relation also has only finitely many classes.

For every equivalence class $c$, let $I_c$ denote the closure of the convex hull of $c$. Here
we encounter a technical difficulty: perhaps the $I_c$'s are not disjoint, so the closure of
the union of the $\Sigma_n$ is not a \svar, because some points are approached from both sides.

To deal with this, choose arbitrarily a segment $I'_c$ contained in the interior of
$I_c$. Then map linearly $I_c$ onto $I'_c$. Let $c'$ be the image of $c$ under this
mapping. From the union of all those $c$'s, we can, by taking convex hulls in each
$3$-simplex, reconstruct a collection of normal tori bounding graph \svar s. Each of these
new tori is normally homotopic to an old one. We keep the same notation.

Let $\Sigma$ be the closure of the union of the $\Sigma_n$. Then $\Sigma$ is a
\svar\ of $M$, whose interior is the union of the $\Sigma_n$. By construction,
it satisfies properties~(ii) and (iii) of the conclusion of Theorem~\ref{th:graph}.
The remaining task is to show that $\Sigma$ is a graph submanifold of $M$.

\subsection{End of the proof}
In this subsection, we are interested in the boundary components of $\Sigma$. We shall
prove that they are \incomp\ annuli or canonical tori. In order to study them, we need
to make sense of the notion that they are approximated by boundary components of the $\Sigma_n$'s, which are normal tori. To this effect we give the following definition:
\begin{defi}
We say that a sequence $F_n$ of normal surfaces \bydef{converges to} a normal
surface $F$ if the following requirements are fulfilled:

\begin{enumerate}
\item For any compact normal subsurface $K\subset F$ (in the sense of
Section~\ref{sec:pl}), there exists $n_0(K)$ such that for all
$n\ge n_0(K)$, there is a normal subsurface $K_n\subset F_n$ normally homotopic to $K$; 
\item If $K_n\subset F_n$ is any sequence of compact normal subsurfaces having the property that
for all sufficiently large $n,n'$, $K_n$ is normally homotopic to $K_{n'}$, then there
exists a subsurface $K\subset F$ which is normally homotopic to $K_n$ for
all sufficiently large $n$.
\end{enumerate}
\end{defi}

The following lemma is immediate from the construction in the previous subsection:
\begin{lem}\label{approx tori}
Let $F$ be a component of $\bord\Sigma$.
There is a sequence $T_n$ of tori such that for every $n$, $T_n$ is a component of
$\bord\Sigma_n$, and which converges to $F$.
\end{lem}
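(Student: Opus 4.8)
Lemma~\ref{approx tori} asserts that every boundary component $F$ of the limit submanifold $\Sigma$ is the limit, in the sense of the convergence definition just given, of a sequence $T_n$ of boundary components of the approximating submanifolds $\Sigma_n$, and that these $T_n$ can be chosen so each $T_n \subset \bord\Sigma_n$. Since the excerpt says this is "immediate from the construction in the previous subsection," the plan is essentially to unwind the construction and check the two defining properties of convergence one at a time.

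Here is the plan. First I would recall precisely how $\Sigma$ and its boundary were built: in each $3$-simplex $\sigma$ one partitions the normal disks of $\sigma \cap \bigcup_n \bord\Sigma_n$ into finitely many equivalence classes (Proposition~\ref{finite class}), passes to the induced finite partition on each edge, replaces each convex hull $I_c$ by an interior segment $I'_c$ to separate the classes, and then reassembles, in each $3$-simplex, a collection of normal disks by taking convex hulls of the resulting edge-point classes. The boundary $\bord\Sigma$ is exactly the union of these reassembled disks; each such disk is normally homotopic to the disks in the class it came from. Since $F$ is connected, it is a union of such disks glued along normal arcs, and each of its disks carries a label: the equivalence class $c$ it represents in its ambient $3$-simplex. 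Because the equivalence relation was built from the $\Sigma_n$, each class $c$ is witnessed: there is $n$ and a component $X$ of $\Sigma_n \cap \sigma$ containing disks of $c$, and since the sequence $\Sigma_n$ is increasing, all disks of $c$ eventually lie in a single component of $\Sigma_m \cap \sigma$ for all large $m$. This is the source of the approximating sequence: for each $n$ large enough, the disks of $\bord\Sigma_n$ representing the classes occurring in a given compact piece of $F$ assemble (they agree on shared edges precisely because equivalence is transitive and respects edge-points) into a normal subsurface of $\bord\Sigma_n$ normally homotopic to that piece of $F$. A diagonal argument over an exhaustion of $F$ by compact normal subsurfaces, using that $\bord\Sigma_n$ has only finitely many components (Proposition~\ref{prop:seq}) so one can fix which component of $\bord\Sigma_n$ is used, produces the single sequence $T_n$.

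The verification of property~(i) of convergence goes as follows: given a compact normal subsurface $K \subset F$, $K$ meets only finitely many $3$-simplices, hence involves only finitely many equivalence classes $c_1,\dots,c_r$; choose $n_0$ so large that for every $c_i$ all its disks lie in a common component of $\Sigma_{n_0}\cap\sigma_i$ and also $K$'s combinatorics have stabilized. For $n \ge n_0$, select in $\bord\Sigma_n$ the disks lying in those components representing $c_1,\dots,c_r$; because the relabeling that produced $\bord\Sigma$ is, within each $3$-simplex and on each edge, a bijection onto the corresponding disks/points of $\bord\Sigma_n$ up to normal homotopy, these disks glue up to a normal subsurface $K_n \subset \bord\Sigma_n$ normally homotopic to $K$. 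For property~(ii): if $K_n \subset F_n = \bord\Sigma_n$ is a sequence that is eventually normally homotopic to a fixed compact combinatorial type, then the disks of $K_n$ in each relevant $3$-simplex eventually represent a fixed finite set of equivalence classes (here one uses Proposition~\ref{finite class} again, to know that a given combinatorial disk type in $\sigma$ splits into only finitely many classes, so a convergent/constant subsequence of labels stabilizes), and the corresponding reassembled disks in $\bord\Sigma$ glue to the desired $K \subset F$; one should note that $K$ is automatically a subsurface of a single component $F$ because the $K_n$ were connected and lay in a single component of $\bord\Sigma_n$, and the limiting-disk assignment is continuous with respect to the adjacency relation.

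The main obstacle, and the one point deserving genuine care, is the bookkeeping in the relabeling step: when $I_c$ was shrunk to $I'_c$ inside each edge, one must be sure that the new normal disks in each $3$-simplex actually close up into embedded normal surfaces and that the "representing" correspondence between disks of $\bord\Sigma$ and disks of $\bord\Sigma_n$ is well-defined and compatible across faces — i.e.\ that a disk of $F$ straddling two $3$-simplices along an edge $e$ corresponds, on both sides, to disks of the \emph{same} component of $\Sigma_n$. This is exactly the transitivity of the equivalence relation (noted to hold because $\Sigma_n$ is increasing) together with the fact that the edge-relation was defined from the simplex-relation, so the two faces see the same class $c$; once this compatibility is isolated and stated, both properties of convergence follow by the finiteness inputs (Lemma~\ref{finite canonical}, Lemma~\ref{same torus}, Proposition~\ref{finite class}) essentially formally, and the diagonal argument to pass from compact pieces to a single sequence $T_n$ is routine given that $\bord\Sigma_n$ has finitely many components.
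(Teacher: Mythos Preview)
Your proposal is correct and matches the paper's approach: the paper gives no argument beyond the single sentence ``immediate from the construction in the previous subsection,'' and what you have written is exactly the unwinding of that construction needed to verify the two clauses of the convergence definition. Your identification of the one genuine point to check---compatibility of the disk-to-class correspondence across shared faces, which comes down to transitivity of the equivalence relation and the fact that the edge relation is induced from the simplex relation---is the right place to focus, and once that is isolated the rest is, as you say, formal from Proposition~\ref{finite class} and the finiteness of $\bord\Sigma_n$.
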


We next record an important consequence of hypothesis B:
\begin{lem}\label{lem:B}
There is a constant $C'_2$ such that if $T$ is a noncanonical normal least weight torus in $M$,
and  $D$ is a normal subdisk of $T$, then $\diam(D)\le \diam(\bord D) + C'_2$.
\end{lem}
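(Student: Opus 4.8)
\textit{Proof proposal.} The plan is to exploit hypothesis~B, whose content is exactly that every point of a noncanonical least weight torus sits on a \emph{short} special curve, and to combine it with the elementary fact that a short curve has small $d_\calT$-diameter. The one genuinely geometric observation is that a special curve, being essential on $T$, cannot be swallowed by an embedded disk in $T$; hence it must exit $D$ quickly and thereby connect any interior point of $D$ to $\bord D$ by a short path.

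First I would record the length/size comparison for curves. If $\alpha$ is an arc in general position with respect to $\calT$, then $\alpha$ misses $\calT^{(1)}$ and its successive intersection points with $\calT^{(2)}$ chop it into subarcs, each lying in the closure of a single $3$-simplex; consequently $\alpha$ is covered by at most $\lg(\alpha)+1$ three-simplices, so $\size(\alpha)\le \lg(\alpha)+1$ and in particular $d_\calT(x,y)\le \lg(\alpha)$ when $x,y$ are the endpoints of $\alpha$. Next, fix $x\in D$; if $x\in\bord D$ there is nothing to do, so assume $x\in\Int D$. Apply hypothesis~B to $T$ and $x$ to get a special curve $\gamma$ through $x$ with $\lg(\gamma)\le C_2$. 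Since $\gamma$ represents the essential free isotopy class $\xi(T)$, it is not null-homotopic on $T$, hence $\gamma\not\subset D$; as $\gamma$ is connected and meets $\Int D$ (at $x$) and its complement in $T$, it meets $\bord D$ in some point $y$. One of the two subarcs of $\gamma$ joining $x$ to $y$ has length at most $\lg(\gamma)\le C_2$, so by the comparison above $d_\calT(x,y)\le C_2$. Thus every point of $D$ is at $d_\calT$-distance at most $C_2$ from $\bord D$.

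Finally, for $x_1,x_2\in D$ pick $y_1,y_2\in\bord D$ with $d_\calT(x_i,y_i)\le C_2$, realized by arcs covered by at most $C_2+1$ three-simplices each, and a path from $y_1$ to $y_2$ covered by at most $\diam(\bord D)+1$ three-simplices. Concatenating these three paths gives a path from $x_1$ to $x_2$ covered by at most $\diam(\bord D)+2C_2+3$ three-simplices, whence $d_\calT(x_1,x_2)\le \diam(\bord D)+2C_2+2$. Taking the supremum over $x_1,x_2$ yields the lemma with $C'_2:=2C_2+2$. The argument is routine once hypothesis~B is in hand; the only thing requiring a moment's care is the bookkeeping with $\size$ and the quasimetric (which is only subadditive up to a bounded error), and the remark that specialness forces $\gamma$ off any embedded disk in $T$. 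I do not expect a substantive obstacle here.
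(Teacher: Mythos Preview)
Your proof is correct and follows essentially the same route as the paper's: apply hypothesis~B to a point of $D$, use that a special curve is essential and therefore cannot sit inside the embedded disk $D$, and conclude that every point of $D$ is $d_\calT$-close to $\bord D$. The paper compresses this to two lines by choosing $x$ at maximal distance from $\bord D$ and leaving the length/size comparison and the final triangle-inequality step implicit; you spell out the quasimetric bookkeeping and obtain the explicit constant $C'_2=2C_2+2$, but there is no substantive difference in strategy.
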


\begin{proof}
Apply hypothesis B to a point $x\in D$ whose distance to $\bord D$ is maximal, noting
that a special curve is noncontractible in $M$, hence cannot lie entirely in $D$.
\end{proof}

\begin{lem}\label{topo F}
The following assertions hold:
\begin{enumerate}
\item $F$ is \incomp\ in $M$;
\item $F$ is a torus or an annulus;
\item If $F$ is a torus, then it is canonical.
\end{enumerate}
\end{lem}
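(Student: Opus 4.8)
The proof will exploit the convergence $T_n\to F$ from Lemma~\ref{approx tori} together with the uniform weight bound on the $T_n$ (inherited from the fact that they are least-weight noncanonical or canonical tori, hence controlled by hypothesis~A and the construction). The key point is that $F$ is built out of pieces normally homotopic to subsurfaces of the $T_n$'s, so topological and metric control on the $T_n$'s passes to $F$.

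First I would prove (i), incompressibility. Suppose $F$ were compressible; then there is a compressing disk $D$ with $\bord D\subset F$ essential in $F$. The disk $D$ is compact, so it meets only finitely many $\Sigma_n$-boundary components; using Lemma~\ref{approx tori}, a subsurface $K\subset F$ containing $\bord D$ is normally homotopic to a subsurface $K_n\subset T_n$ for large $n$. Then $\bord D$ is homotopic into $T_n$, and since $T_n$ is incompressible, the curve $\bord D$ — being contractible in $M$ via $D$ — must already be contractible in $T_n$, hence contractible in $K_n$, hence in $K$, hence in $F$. Thus $\bord D$ was inessential in $F$, a contradiction. So $F$ is incompressible.

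Next, (ii): $F$ is a torus or an annulus. Here the main obstacle lies. Since $F$ is an incompressible properly embedded surface in an open irreducible $3$-manifold arising as a boundary component of $\Sigma=\overline{\bigcup\Sigma_n}$, it is either a torus, a line$\times S^1$ (open annulus), or has more topology. The construction in the previous subsection produces $F$ from finitely many equivalence classes of normal disks in each simplex, but one must rule out higher-genus or higher-complexity ends. I would argue locally: by the finiteness of equivalence classes (Proposition~\ref{finite class}), $F$ meets each simplex in boundedly many disks, so $F$ has bounded geometry as a normal surface. Combined with incompressibility and the fact that $F$ is approximated by tori $T_n$, a subsurface exhaustion argument via condition~(2) of the definition of convergence shows that every compact normal subsurface of $F$ embeds (up to normal homotopy) in some $T_n$, hence has genus $0$ and at most two boundary curves that are essential; taking the direct limit forces $F$ to be a torus, an open annulus, or a plane/disk — and the plane/disk case is excluded by incompressibility (a plane would be compressible, a disk too). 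This is the step I expect to require the most care: controlling the limit of the genus and number of ends, and in particular using Lemma~\ref{lem:B} to bound the diameter of the subdisks of $F$ so that the exhaustion is uniform.

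Finally, (iii): if $F$ is a torus, it is canonical. Suppose not; then some incompressible torus $T''$ intersects $F$ essentially. A least PL area representative $T''$ is noncanonical, hence contained in some $\Sigma_N$ (indeed in some $T_n$-like region), so $T''\subset\Sigma$. But $F=\bord\Sigma$, so $T''$ lies on one side of $F$; an essential intersection of $T''$ with $\bord\Sigma$ contradicts the fact that $T''\subset\mathrm{Int}\,\Sigma$ and $F$ is a boundary component. More precisely: since each $\Sigma_n$ is taut (Proposition~\ref{tautening}, Proposition~\ref{prop:seq}), any least PL area incompressible torus that can be homotoped into $\Sigma_n$ is already contained in $\Sigma_n$ and so is disjoint from $\bord\Sigma_n$; passing to the limit $F$, using the convergence and the fact that least PL area tori are disjoint or equal (Proposition~\ref{pl stuff}(iii)), $T''$ is disjoint from or equal to $F$, so it is homotopically disjoint from $F$. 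This holds for every incompressible torus, so $F$ is canonical.
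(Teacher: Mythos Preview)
Parts (i) and (ii) both have gaps that the paper fills using hypothesis~B via Lemma~\ref{lem:B}. In (i), the step ``contractible in $T_n$, hence contractible in $K_n$, hence in $K$'' is unjustified: incompressibility of $T_n$ only gives a disk $D'_n\subset T_n$ bounded by the transferred curve, and $D'_n$ need not lie in $K_n$; a priori it could be arbitrarily large, so nothing forces the null-homotopy back into $K\subset F$. The paper uses Lemma~\ref{lem:B} to bound $\diam D'_n$ (hence $\wt D'_n$) uniformly in $n$, so only finitely many normal homotopy types of $D'_n$ occur, and then part~(ii) of the definition of convergence produces a disk $D'\subset F$ with $\bord D'=\bord D$. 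In (ii), your exclusion of the plane fails because a plane has trivial $\pi_1$ and is therefore incompressible in the sense used here. The paper instead proves a sublemma, again from hypothesis~B, that near every point of $F$ there is a \emph{special} curve of uniformly bounded length; this both rules out $F\cong\Rr^2$ and, since any two special curves in $F$ are homotopic (as one sees by approximation in some $T_n$), rules out three or more ends. Planarity is handled separately: a nonplanar compact normal $K\subset F$ would give genus-one $K_n\subset T_n$ whose boundary curves bound disks in $T_n$; Lemma~\ref{lem:B} bounds the diameters of these disks, hence of the $T_n$, contradicting noncompactness of $F$.

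In (iii) you invoke Proposition~\ref{pl stuff}(iii) in the wrong direction: it says that homotopically disjoint least-PL-area surfaces are disjoint or equal, not the converse, so from $T''\cap F=\emptyset$ you cannot deduce that $T''$ and $F$ are homotopically disjoint. The paper's argument is different and hinges on the observation that when $F$ is compact, the approximating $T_n$ are eventually normally homotopic to $F$, and by uniqueness of the least-PL-area representative in a normal homotopy class (Proposition~\ref{pl stuff}(i)) they are eventually \emph{equal}; thus $F=T_n$ is itself a least-PL-area torus and remains a component of $\bord\Sigma_m$ for all large $m$. If $F$ were noncanonical, any essentially intersecting torus $T'$ would also be noncanonical, so its least-PL-area representative would lie in some $\Sigma_m$, which contradicts $F$ persisting as a boundary component of every $\Sigma_m$.
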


\begin{proof}
(i) We argue by contradiction. Let $D$ be a compressing disk for $F$ whose boundary
is in general position.  By a sequence of istotopies that reduce the length, we may
assume that $\bord D$ is normal in the sense of section~\ref{sec:pl}. Now $\bord D$ is contained in some compact normal subsurface $K$ of $F$. Applying Lemma~\ref{approx tori} and using part~(i) of the definition of convergence
of sequences of normal surfaces, we see that there is for $n$ large enough a normal subsurface $K_n\subset T_n$ normally homotopic to $K$. Hence we can find a normal curve $\gamma_n\subset K_n$ which is connected
to $\bord D$ by a small annulus $A_n$, and we find compression disks $D_n:=A_n\cup D$
for $T_n$. Since each $T_n$ is \incomp, there is for each large $n$ a disk $D'_n\subset T_n$
with $\bord D_n=\bord D'_n$.

By construction the diameter of $\bord D'_n$ is independent of $n$. Hence Lemma~\ref{lem:B}
gives a uniform upper bound for the diameters of the $D'_n$'s.
By Lemma~\ref{myineq}, the weights of the $D'_n$ are also uniformly bounded above. Hence there
are only finitely many of them up to normal homotopy. Using part~(ii) of the definition
of convergence of sequences of normal surfaces, we get a disk $D'\subset F$ such that
$\bord D=\bord D'$.

(ii) If $F$ is compact, then Lemma~\ref{approx tori} implies that
$F$ is a torus. Hence we suppose that $F$ is noncompact.

Let $\gamma\subset F$ be a simple closed curve. We say that it is \bydef{special}
if it is normally homotopic to a special curve $\gamma_n\subset T_n$ for large $n$.
In particular, any special curve in $F$ is essential in $M$, hence essential in $F$.
\begin{slem}\label{unif rep}
There is a constant $C''_2$ such that for every $x\in F$, there is a special curve $\gamma\subset F$ of length at most $C''_2$ whose distance to $x$ is at most $C''_2$.
\end{slem}

\begin{proof}
Take $x\in F$. Then $x$
is a limit of a sequence $x_n\in T_n$. By hypothesis~B of the main theorem, through
each $x_n$ there is a special curve
$\gamma_n\subset T_n$ of length at most $C_2$. For each $n$ we perform a normalizing
sequence of homotopies, getting a normal special curve $\gamma'_n\subset T_n$.
The bound on the length of $\gamma_n$ gives  bounds on both the length of $\gamma'_n$
and the distance between $\gamma'_n$ and $x_n$. Hence there are only finitely many
$\gamma'_n$'s up to normal isotopy. This shows that a subsequence of $\gamma'_n$
converges to some normal special curve $\gamma\subset F$ whose length and distance
from $x$ can be bounded above by a constant $C''_2$ depending only on $C_2$.
This proves Sublemma~\ref{unif rep}.
\end{proof}

Next we show that  $F$ is planar. By way of contradiction, suppose that $F$ is nonplanar.
Then it contains a nonplanar compact normal subsurface $K$. Let $K_n \subset T_n$ be a sequence
of approximating subsurfaces. Then for each $n$,
$K_n$ has genus $1$.  Hence its boundary consists of curves of uniformly
bounded length that bound disks on $T_n$. By Lemma~\ref{lem:B},
those disks have unifomly bounded diameter. This implies that the $T_n$'s have uniformly bounded diameter, contradicting the
noncompactness of the limit $F$. This contradiction proves that $F$ is planar.

By Sublemma~\ref{unif rep}, $F$ contains an essential curve, so it cannot be homeomorphic
to $\Rr^2$. If it had more than two ends, then we could find a compact
subsurface $K\subset F$
such that $F\setminus K$ has at least three noncompact components $U_1,U_2,U_3$.
For $i=1,2$ pick a point $x_i\in U_i$ such that $d(x_i,K)>C_2$. By Sublemma~\ref{unif rep},
there are special curves $\gamma_i\subset U_i$ for $i=1,2$. Now special curves are
homotopic in $F$, as can be seen by approximation in some $T_n$. This contradiction
proves that $F$ has two ends. Hence it is an annulus and (ii) is proven.

(iii) If $F$ is a torus, then for $n$ sufficiently large, $\bord\Sigma$ contains a torus
$T_n$ normally homotopic to $F$. By uniqueness of the least PL area representative
of a normal homotopy class, the $T_n$'s are equal. If they were not canonical,
then there would exist a least PL area $T'$ that is not homotopically disjoint from $T_n$.
For large $n$, $T'$ would have to be contained in $\Sigma_n$.  This is a contradiction.
\end{proof}
 
At last we show that $\Sigma$ is a graph submanifold of $M$: let $Z$ be a union of PL least area canonical tori
and Klein bottles such that every canonical torus contained in $\Sigma$ is homotopic to some component
of $Z$, or a double cover of some component of $Z$. By hypothesis~A
and Proposition~\ref{loc fin}, $Z$ is a \svar. Let $U$ be a regular neighborhood of $Z$
in $\Sigma$. All we have to show is that every component of $\Sigma\setminus U$
is Seifert fibered.

Let $X$ be such a component. Choose for each annular component $A_i$ of $\bord X$ an annulus $A'_i\subset X$ properly
homotopic to $A_i$, in such a way that the $A'_i$'s do not intersect one
another, and that for each $n$, $A'_i\cap \Sigma_n$
is empty or an annulus whose core is a special curve. Then the $A'_i$'s together
with the toral components of $\bord X$ bound a \svar\ $X'\subset X$ such that
$X$ retraction deforms onto $X'$. In particular, $X$ and $X'$ are homeomorphic, so
it is enough to prove that $X'$ is Seifert fibered.

In order to do this, we notice that the \svar s $\Sigma_n\cap X'$ can be given compatible
Seifert fibrations such that for every annulus $A'_i$ and every $n$, if $A'_i\cap \Sigma_n$
is nonempty, then it is a vertical annulus. This gives a Seifert fibration on
$X\setminus \bigcup_i A_i$ such that every $A'_i$ is vertical. This Seifert fibration
restricts to a Seifert fibration on $X'$. Hence $\Sigma$ is a graph submanifold of $M$,
and the proof of Theorem~\ref{th:graph} is complete.

\bibliographystyle{abbrv}
\bibliography{jsj}

Institut de Recherche Math\'ematique Avanc\'ee, Universit\'e Louis
Pasteur, 7 rue Ren\'e Descartes, 67084 Strasbourg Cedex, France\\ 
\texttt{maillot@math.u-strasbg.fr}

\end{document}